\newtheorem{thm}{Theorem}[section]
\newtheorem{lem}[thm]{Lemma}
\newtheorem{prop}[thm]{Proposition}
\theoremstyle{definition}
\newtheorem{defn}[thm]{Definition}
\newtheorem{rem}[thm]{Remark}
\newtheorem{exa}[thm]{Example}
\newif\ifpdf \pdftrue
\begin{document}

\title{Arithmetic invariants of Euclidean lattice}

\author{Shun Tang}

\date{}

\maketitle

\vspace{-10mm}

\hspace{5cm}\hrulefill\hspace{5.5cm} \vspace{5mm}

\textbf{Abstract.} In this paper we study the arithmetic invariants of Euclidean lattice in the context of Arakelov geometry. We regard a Euclidean lattice as a hermitian vector bundle $\bar E$ on ${\rm Spec}(\mathbb{Z})$ and consider two typical arithmetic analogues of the dimension of the space of global sections of a vector bundle on an algebraic curve. One is $$h^0_{\rm Ar}(\bar E):=\log \vert E\cap B_1 \vert$$ where $B_1$ is the unit ball, and the other is $$h^0_{\theta}(\bar{E}):=\log\sum_{v\in E}e^{-\pi\Vert v\Vert^2}$$ where $\sum_{v\in E}e^{-\pi\Vert v\Vert^2}$ is the theta function of $\bar E$. In this paper, we shall prove the following three statements: (i) the fact that one can not reach an absolute Riemann-Roch theorem for $h^0_{\rm Ar}(\bar E)$ is an instance of the Heissenberg uncertainty principle; (ii) the finiteness of equivalence classes in the genus of a positive quadratic form defined over $\mathbb{Z}$ is equivalent to the finiteness of certain isometry classes of hermitian vector bundles on ${\rm Spec}(\mathbb{Z})$, and it can be deduced from a finiteness theorem in Arakelov theory of ${\rm Spec}(\mathbb{Z})$; (iii) for any smooth function $f$ on $\mathbb{R}_{+}$ such that $f>0$ and that $f\circ {\rm exp}$ is a Schwartz function on $\mathbb{R}$, the Mellin transform of $f$ can be written as an integral over the Arakelov divisor class group of ${\rm Spec}(\mathbb{Z})$.

\textbf{2020 Mathematics Subject Classification:} 11E12, 14C40, 14G40


\section{Introduction}
A Euclidean lattice is the data $(V, \Lambda, \Vert\cdot\Vert)$ of some finite dimensional $\mathbb{R}$-vector space $V$, equipped with some Euclidean norm $\Vert\cdot\Vert$, and of some lattice $\Lambda$ in $V$. Equivalently, it is the data $\bar{E}:=(E, \Vert\cdot\Vert)$ of some free $\mathbb{Z}$-module of finite rank $E$, and of some Euclidean norm $\Vert\cdot\Vert$ on the $\mathbb{R}$-vector space $E_\mathbb{R}:=E\otimes \mathbb{R}$. The morphism $$E\to E_\mathbb{R},\quad v\mapsto v\otimes 1$$ is injective whose image is a lattice in $E_\mathbb{R}$.

Let $\bar{E}:=(E, \Vert\cdot\Vert)$ be a Euclidean lattice, it is classically attached some invariants, which depend only of its isomorphism class. For instance, we have

(i) ${\rm rk}E={\rm dim}_\mathbb{R} E_\mathbb{R}\in \mathbb{N}$;

(ii) ${\rm covol}(\bar{E})=m_{\bar{E}}(\Delta)\in \mathbb{R}_{+}^*$, where $m_{\bar{E}}$ denotes the Lebesgue measure on $(E_\mathbb{R},\Vert\cdot\Vert)$ and $\Delta$ denotes a fundamental domain for $E$.

In the context of Arakelov geometry, we call $C:={\rm Spec}(\mathbb{Z})$ an arithmetic curve, its function field is $k(C)=\mathbb{Q}$. Then a Euclidean lattice $\bar{E}$ can be viewed as a hermitian vector bundle over $C$, which can be equipped with more invariants. The one we are most interested in is the analogue of the dimension of the space of global sections of a vector bundle on an algebraic curve. There are two typical ways stemmed from the comparison between number ﬁeld and the function ﬁeld of an algebraic curve deﬁned over ﬁnite ﬁeld to get such arithmetic analogues. 

The first way is to imitate the concept of effectivity of divisors. If $C$ is an algebraic curve over a finite filed of $q$ elements and $D$ is a divisor on $C$, then $$H^0(C, \mathcal{O}(D))=\{f\in K^* \text{ } \vert \text{ } {\rm div}(f)+D\geq 0\}\cup\{0\}$$
is a finite set of cardinality $q^{h^0(C,\mathcal{O}(D))}$, where $h^0(C,\mathcal{O}(D))$ is the dimension of $H^0(C, \mathcal{O}(D))$.

Let us consider $C:={\rm Spec}(\mathbb{Z})$. An Arakelov divisor on ${\rm Spec}\mathbb{Z}$ is a finite sum in the following form 
$$\bar{D}=\sum_{p\in {\rm Spm}\mathbb{Z}}n_p[p]+\lambda[\infty],\qquad \lambda\in \mathbb{R}.$$
For $f\in \mathbb{Q}^*$, the principal Arakelov divisor associated to $f$ is 
$$\widehat{\rm div}(f):=\sum_{p\in {\rm Spm}\mathbb{Z}}{\rm ord}_p(f)[p]-\log\vert f\vert[\infty].$$
For an Arakelov divisor $\bar{D}$, we define its degree as
$$\widehat{\rm deg}(\bar{D}):=\sum_{p\in {\rm Spm}\mathbb{Z}}n_p\log p +\lambda.$$
Note that the degree of a principal Arakelov divisor is $0$. We consider 
\begin{align*}
H^0(C,\mathcal{O}(\bar D)):&=\{f\in \mathbb{Q}^*\text{ }\vert \text{ } \widehat{\rm div}(f)+\bar{D}\geq 0 \}\cup\{0\}\\
&=\{f\in \prod_p p^{-n_p} \text{ } \vert \text{ } \Vert f\Vert_D:=e^{-\lambda}\vert f\vert\leq 1\}
\end{align*}
and define
$$h^0_{\rm Ar}(\mathcal{O}(\bar D)):=\log \vert \prod_p p^{-n_p}\cap B_1 \vert.$$
In general, we may define
$$h^0_{\rm Ar}(\bar E):=\log \vert E\cap B_1 \vert$$
for hermitian vector bundles of higher rank.

The second way is to imitate the construction of the zeta function of an algebraic curve defined over a finite field. If $C$ is an algebraic curve over a finite filed of $q$ elements and $D$ is a divisor on $C$. We denote $N(D)=q^{{\rm deg}(D)}$ by the norm of $D$, then the Zeta function $Z_c(s)$ of $C$ is defined as 
$$Z_c(s)=\sum_{D\geq 0}N(D)^{-s}\qquad ({\rm Re}(s)>1).$$
The norm $N(D)$ only depends on the class $[D]$ of $D$ in the Picard group, so we have
\begin{align*}
Z_c(s)&=\sum_{[D]\in {\rm Pic}(C)}\vert \{D'\in [D]\text{ } \vert \text{ } D'\geq 0\}\vert N[D]^{-s}\\
&=\sum_{[D]\in {\rm Pic}(C)}\frac{q^{h^0(C,\mathcal{O}(D))}-1}{q-1}N[D]^{-s}.
\end{align*}

In \cite{GS}, van der Geer and Schoof introduced a new effectivity concept for Arakelov divisors on ${\rm Spec}(\mathbb{Z})$. Precisely, to an Arakelov divisor $\bar{D}=\sum_{p\in {\rm Spm}\mathbb{Z}}n_p[p]+\lambda[\infty]$, the effectivity of $\bar{D}$ is defined as
$$e(\bar{D}):=\begin{cases}
	{\rm exp}(-\pi\Vert 1\Vert^2_D)  &  \text{if } \prod_p p^{-n_p}\supset \mathbb{Z}  \\
	0  &  \text{otherwise}
\end{cases},$$
then
$$H^0(C,\mathcal{O}(\bar D))=\{f\in \mathbb{Q}^*\text{ }\vert \text{ } e\big(\widehat{\rm div}(f)+\bar{D}\big)>0 \}\cup\{0\}.$$

Let $\zeta(s)=\sum_{(0)\neq J\subset \mathbb{Z}} N(J)^{-s}$ be the Riemann zeta function. Denote $J=\prod_p p^{n_p}$ and $N(\bar{D})=e^{\widehat{\rm deg}(\bar{D})}$. The complete zeta function 
$$Z_{\mathbb Q}(s):=2\pi^{-s/2}\Gamma(s/2)\zeta(s)$$
can be written as 
$$Z_{\mathbb Q}(s)=\int_{\widehat{\rm Pic}(\mathbb{Z})}N([\bar{D}])^{-s}\big(\int_{[\bar{D}]}{\rm exp}(-\pi\Vert 1\Vert^2_D){\rm d}\bar{D}\big){\rm d}[\bar{D}].$$
While
\begin{align*}
	\int_{[\bar{D}]}{\rm exp}(-\pi\Vert 1\Vert^2_D){\rm d}\bar{D} &=\sum_{(0)\neq (f)\subset \prod_p p^{-n_p}}e^{-\pi\Vert 1\Vert^2_{(f)+D}} \\
	&=\frac{1}{w}\big(\sum_{f\in\prod_p p^{-n_p}}e^{-\pi\Vert f\Vert^2_{D}}-1\big).
\end{align*}
where $w=2$ is the number of roots of unity in $\mathbb{Z}$.
This leads to define
$$h^0_{\theta}\big(\mathcal{O}(\bar{D})\big):=\log\sum_{f\in\mathcal{O}(D)}e^{-\pi\Vert f\Vert^2_{D}}.$$
In general, we may define
$$h^0_{\theta}(\bar{E}):=\log\sum_{v\in E}e^{-\pi\Vert v\Vert^2}\in \mathbb{R}_{+}$$
for hermitian vector bundles of higher rank.

Compare these two analogues of $h^0$, the first one $h^0_{\rm Ar}$ looks more natural, but one can not reach an absolute Riemann-Roch theorem for it. The second one $h^0_{\theta}$ fits into an absolute Riemann-Roch theorem
$$h^0_{\theta}(\bar{E})-h^0_{\theta}(\bar{E}^\vee)=\widehat{{\rm deg}}\bar{E}:=\widehat{{\rm deg}}\left(\Lambda^{{\rm rk}E}\bar{E}\right)$$
where $\bar{E}^\vee$ is the dual of $\bar{E}$, since the theta function has functional equation which is highly symmetric.

A natural question is that whether there exists a uniform formalism for $h^0_{\theta}$ and $h^0_{\rm Ar}$. Actually in \cite{Ro}, Roessler introduced a framework from quantum view point to study simultaneously the $\theta$-measure and the classical measure so that the Minkowski problem amounts to the computation of the measure of the entire $E$. In this framework, the fact that one can not reach an absolute Riemann-Roch theorem for $h^0_{\rm Ar}(\bar E)$ is an instance of the Heissenberg uncertainty principle. As mentioned by Roessler in \cite{Ro}, this idea seems to go back to Atiyah, but we couldn't find a proof in any literature so we include one in this paper.

On the other hand, the set ${\rm Bun}_n$ of isometry classes of hermitian vector bundles on ${\rm Spec}(\mathbb{Z})$ is one-to-one correspondent to the double quotient ${\rm GL}_n(\mathbb{Q})\backslash{\rm GL}_n(\mathbb{A}_{\mathbb{Q}})/U$ where $\mathbb{A}_{\mathbb{Q}}$ is the Ad\`{e}le ring of $\mathbb{Q}$ and $U$ is the maximal compact subgroup of ${\rm GL}_n(\mathbb{A}_{\mathbb{Q}})$. The “moduli space” of such bundles is the classical quotient of reduction theory of quadratic forms, functions on ${\rm Bun}_n$ are the same as automorphic forms on ${\rm GL}_n(\mathbb{R})$. In this paper, we transform a positive quadratic form to the hermitian norm on certain vector bundle on ${\rm Spec}(\mathbb{Z})$ and prove that the ﬁniteness of equivalence classes in the genus of this positive quadratic form is equivalent to the finiteness of certain isometry classes of hermitian vector bundles, which can be deduced from a finiteness theorem in Arakelov theory of ${\rm Spec}(\mathbb{Z})$. 

The last result in this paper is to relate the Mellin transform of functions on ${\rm Bun}_1$ to arithmetic invariants of hermitian line bundles. Precisely, we will show that for any smooth function $f$ on $\mathbb{R}_{+}$ such that $f>0$ and that $f\circ {\rm exp}$ is a Schwartz function on $\mathbb{R}$, the Mellin transform of $f$ can be written as an integral over the Arakelov divisor class group of ${\rm Spec}(\mathbb{Z})$.

\textbf{Acknowledgements.} This work is partially supported by NSFC (no. 12171325) and by National Key R$\&$D Program of China No. 2023YFA1009702.

\section{Quantum measure and the Minkowski problem}
For a Euclidean lattice $\bar{E}$, we image that very small particles are located at the lattice points in $E$. We wonder the number of particles inside domain $B_r$. We shall think this problem quantum-mechanically.

Let $P_r(v)$ be the probability of the particle $v$ of being found inside $B_r$, the question is what's the sum of all the $P_r(v)$ for $v\in E$.

We assume that the particle waves all have Gaussian probability distributions
$$\sqrt{t}\cdot{\rm exp}(-\pi t\Vert x-v\Vert^2),$$
then the answer is the following integral
$$\int_{B_r}\sum_{v\in E}\sqrt{t}\cdot{\rm exp}(-\pi t\Vert x-v\Vert^2){\rm d}x.$$

We formalize three different approches to count lattice points in certain domain in terms of measures.

(i) The $\theta$-measure on $E$:
$$\mu_{\theta_t}(v)=\sqrt{t}\cdot{\rm exp}(-\pi t\Vert v\Vert^2).$$

(ii) The quantum measure on $E$:
$$\mu_{Q_{t,r}}(v)=\int_{B_r}\sqrt{t}\cdot{\rm exp}(-\pi t\Vert x-v\Vert^2){\rm d}x.$$

(iii) The classical measure on $E$:
$$\mu_{C_r}(v)=\psi_{B_r}(v)$$
where $\psi_S$ is the characteristic function of the set $S$.

The following two propositions were stated in \cite{Ro}, and they connect the above different approches counting lattice points.

\begin{prop}\label{measure1}
	$\lim_{r\to 0}\frac{\mu_{Q_{t,r}}(v)}{{\rm vol}(B_r)}=\mu_{\theta_t}(v)$.
\end{prop}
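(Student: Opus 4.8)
The plan is to compute both sides of the identity $\lim_{r\to 0}\frac{\mu_{Q_{t,r}}(v)}{{\rm vol}(B_r)}=\mu_{\theta_t}(v)$ directly from the definitions. Writing $n={\rm rk}\,E$, we have by definition
$$\frac{\mu_{Q_{t,r}}(v)}{{\rm vol}(B_r)}=\frac{1}{{\rm vol}(B_r)}\int_{B_r}\sqrt{t}\cdot{\rm exp}(-\pi t\Vert x-v\Vert^2)\,{\rm d}x,$$
which is precisely the average value of the continuous function $g(x):=\sqrt{t}\cdot{\rm exp}(-\pi t\Vert x-v\Vert^2)$ over the ball $B_r$ of radius $r$ centered at the origin of $E_{\mathbb R}$. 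The first step is therefore to invoke continuity of $g$: since $g$ is a composition of smooth functions it is continuous at $x=0$, and $g(0)=\sqrt{t}\cdot{\rm exp}(-\pi t\Vert v\Vert^2)=\mu_{\theta_t}(v)$.

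The second step is the elementary mean-value observation that the average of a continuous function over a shrinking family of balls converges to its value at the center. Concretely, given $\varepsilon>0$, continuity at $0$ gives $\delta>0$ such that $\vert g(x)-g(0)\vert<\varepsilon$ whenever $\Vert x\Vert<\delta$; then for all $r<\delta$,
$$\left\vert\frac{1}{{\rm vol}(B_r)}\int_{B_r}g(x)\,{\rm d}x-g(0)\right\vert=\left\vert\frac{1}{{\rm vol}(B_r)}\int_{B_r}\bigl(g(x)-g(0)\bigr)\,{\rm d}x\right\vert\leq\frac{1}{{\rm vol}(B_r)}\int_{B_r}\varepsilon\,{\rm d}x=\varepsilon.$$
Letting $r\to 0$ and then $\varepsilon\to 0$ yields the claim, with $g(0)=\mu_{\theta_t}(v)$ as computed above.

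There is essentially no obstacle here; the only point requiring a word of care is that ${\rm vol}(B_r)>0$ for every $r>0$ so the quotient is well-defined, and that the Lebesgue measure ${\rm d}x$ used to define $\mu_{Q_{t,r}}$ is the same measure $m_{\bar E}$ used implicitly in ${\rm vol}(B_r)={\rm vol}(B_r)$, so that the normalization is internally consistent. One could also phrase the argument via the Lebesgue differentiation theorem, but invoking mere continuity of the Gaussian is cleaner and self-contained. I expect the proof to be a short paragraph.
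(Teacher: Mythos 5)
Your proof is correct and takes essentially the same route as the paper: both arguments reduce the claim to the fact that the average of the continuous Gaussian $g(x)=\sqrt{t}\,{\rm exp}(-\pi t\Vert x-v\Vert^2)$ over the shrinking balls $B_r$ converges to $g(0)=\mu_{\theta_t}(v)$, the paper phrasing this via the mean-value theorem for integrals (producing a point $x_\xi\in B_r$) while you make the underlying $\varepsilon$--$\delta$ continuity estimate explicit. No gap; the two presentations are interchangeable.
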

\begin{proof}
	By the mean-value theorem we know that 
	\begin{align*}
		\mu_{Q_{t,r}}(v) & =\int_{B_r}\sqrt{t}\cdot{\rm exp}(-\pi t\Vert x-v\Vert^2){\rm d}x \\
		& ={\rm vol}(B_r)\cdot\sqrt{t}\cdot{\rm exp}(-\pi t\Vert x_\xi -v\Vert^2)
	\end{align*}
	with $x_\xi\in B_r$. Therefore, we have
	$$\lim_{r\to 0}\frac{\mu_{Q_{t,r}}(v)}{{\rm vol}(B_r)}=\mu_{\theta_t}(v)=\sqrt{t}\cdot{\rm exp}(-\pi t\Vert v\Vert^2)=\mu_{\theta_t}(v).$$
\end{proof}

\begin{prop}\label{measure2}
	$\lim_{t\to \infty}\mu_{Q_{t,r}}(v)=\mu_{C_r}(v)$.
\end{prop}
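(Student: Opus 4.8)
The plan is to exploit that, as $t\to\infty$, the Gaussian $\sqrt t\cdot{\rm exp}(-\pi t\Vert x-v\Vert^2)$ — with $\sqrt t$ understood as the power of $t$ making it a probability density on $E_{\mathbb{R}}$, i.e.\ $t^{{\rm rk}E/2}$ — concentrates into a point mass at $x=v$; hence $\mu_{Q_{t,r}}(v)$, the part of that mass sitting inside $B_r$, should tend to $1$ when $v$ lies in the interior of $B_r$ and to $0$ when $v$ lies outside $\overline{B_r}$, which is exactly $\psi_{B_r}(v)=\mu_{C_r}(v)$ away from the bounding sphere. Concretely I would fix an orthonormal basis, identifying $(E_{\mathbb{R}},\Vert\cdot\Vert)$ with $\mathbb{R}^{{\rm rk}E}$ with its standard norm and Lebesgue measure (so $\int_{\mathbb{R}^{{\rm rk}E}}{\rm exp}(-\pi\Vert y\Vert^2)\,{\rm d}y=1$), and substitute $x=v+y/\sqrt t$ in the integral defining $\mu_{Q_{t,r}}(v)$ to rewrite it as
$$\mu_{Q_{t,r}}(v)=\int_{\Omega_t}{\rm exp}(-\pi\Vert y\Vert^2)\,{\rm d}y,\qquad \Omega_t:=\sqrt t\,(B_r-v).$$
Everything then reduces to the geometry of the region $\Omega_t$ — a ball of radius $\sqrt t\,r$ centred at $-\sqrt t\,v$ — together with the elementary tail bound $\int_{\Vert y\Vert\ge R}{\rm exp}(-\pi\Vert y\Vert^2)\,{\rm d}y\to 0$ as $R\to\infty$.

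First suppose $v$ lies in the interior of $B_r$ and fix $\rho>0$ with $\overline{B_\rho(v)}\subset B_r$. Then $\Omega_t$ contains the ball of radius $\sqrt t\,\rho$ about the origin and sits inside $\mathbb{R}^{{\rm rk}E}$, so
$$0\le 1-\mu_{Q_{t,r}}(v)\le\int_{\Vert y\Vert\ge\sqrt t\,\rho}{\rm exp}(-\pi\Vert y\Vert^2)\,{\rm d}y,$$
and the right-hand side tends to $0$ as $t\to\infty$; hence $\mu_{Q_{t,r}}(v)\to 1=\psi_{B_r}(v)$. Next suppose $v\notin\overline{B_r}$ and put $\delta:=\Vert v\Vert-r>0$. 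Every $x\in B_r$ satisfies $\Vert x-v\Vert\ge\Vert v\Vert-\Vert x\Vert\ge\delta$, so $\Omega_t\subseteq\{\Vert y\Vert\ge\sqrt t\,\delta\}$ and
$$0\le\mu_{Q_{t,r}}(v)\le\int_{\Vert y\Vert\ge\sqrt t\,\delta}{\rm exp}(-\pi\Vert y\Vert^2)\,{\rm d}y\to 0\qquad(t\to\infty);$$
hence $\mu_{Q_{t,r}}(v)\to 0=\psi_{B_r}(v)$. In either case the limit is $\mu_{C_r}(v)$, as claimed.

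The one case left out is $v\in\partial B_r$: there the rescaled regions $\Omega_t$ converge to an open half-space bounded by a hyperplane through the origin, so $\mu_{Q_{t,r}}(v)\to\tfrac12$ rather than to $0$ or $1$. This is immaterial for the lattice-point application, since $E\cap\partial B_r$ is finite and can be removed by an arbitrarily small dilation of $r$; accordingly I would either take $r$ generic (so that no point of $E$ lies on the sphere $\Vert x\Vert=r$) or simply record this boundary value. I do not expect a genuine obstacle: the computation is the standard approximate-identity argument, and the only point needing care — that the Gaussian concentrates uniformly enough — is handled precisely by the tail bound $\int_{\Vert y\Vert\ge R}{\rm exp}(-\pi\Vert y\Vert^2)\,{\rm d}y\to 0$.
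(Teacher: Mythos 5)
Your proof is correct and rests on the same idea as the paper's --- the Gaussian concentrates into a point mass at $v$ as $t\to\infty$ --- but the paper's entire proof is the one sentence ``the limit of the Gaussian probability distribution as $t\to \infty$ is the Dirac $\delta$-function'' followed by the formal evaluation $\int_{B_r}\delta_v(x)\,{\rm d}x=\psi_{B_r}(v)$; you supply the actual justification for interchanging the limit with the integral, namely the rescaling $x=v+y/\sqrt t$ and the Gaussian tail bound. You also catch two points the paper passes over in silence. First, the prefactor $\sqrt t$ normalizes the Gaussian to a probability density only when ${\rm rk}\,E=1$; for the statement to hold in higher rank one must read it as $t^{{\rm rk}E/2}$, exactly as you do --- with the literal $t^{1/2}$ the limit would be $0$ for every $v$ once ${\rm rk}\,E>1$. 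Second, for $v\in\partial B_r$ the limit is $\tfrac12$, so the proposition as stated fails on the bounding sphere. Both caveats are harmless for the intended lattice-point application (one fixes the normalization consistently and takes $r$ generic so that $E\cap\partial B_r=\emptyset$), but your write-up is the rigorous version of what the paper only asserts heuristically.
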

\begin{proof}
	Note that the limit of the Gaussian probability distribution as $t\to \infty$ is the Dirac $\delta$-function, so we have
       \begin{displaymath}
		 \lim_{t\to \infty}\int_{B_r}\sqrt{t}\cdot{\rm exp}(-\pi t\Vert x-v\Vert^2){\rm d}x
		=	 \int_{B_r}\delta_v(x){\rm d}x 
		=   \psi_{B_r}(v)= \begin{cases}
			1  &  v\in B_r \\
			0  &  v\notin B_r
		\end{cases}.
	\end{displaymath}
\end{proof}

Denote by $\mathcal{S}(V)$ the Schwartz space of a $\mathbb{R}$-vector space $V$. The Fourier transform
$$\mathcal{F}(f)(\xi):=\int_{E_{\mathbb{R}}}f(x)e^{-2\pi i \xi(x)}{\rm d}x$$
with $f\in \mathcal{S}(E_\mathbb{R})$ and $\xi\in E_\mathbb{R}^\vee$ provides an isomorphism 
$$\mathcal{F}: \mathcal{S}(E_\mathbb{R})\simeq \mathcal{S}(E_\mathbb{R}^\vee)$$
which extends to an isomorphism between corresponding spaces of tempered distributions.

The Poisson formula asserts that the counting measures $\sum_{v\in E}\delta_v$ and $\sum_{\xi\in E^\vee}\delta_\xi$ may be deduced from each other by
$$\mathcal{F}(\sum_{v\in E}\delta_v)=\big({\rm covol}(\bar{E})\big)^{-1}\sum_{\xi\in E^\vee}\delta_\xi.$$
Therefore, for any $f\in \mathcal{S}(E_\mathbb{R})$, we have
$$\sum_{v\in E}f(v)=\big({\rm covol}(\bar{E})\big)^{-1}\sum_{\xi\in E^\vee}\mathcal{F}(f)(\xi).$$

Now we define
$$f_{t,r}(x):=\int_{B_r}\sqrt{t}\cdot{\rm exp}(-\pi t\Vert y-x\Vert^2){\rm d}y.$$
Then 
$$(\mathcal{F}f_{t,r})(\xi)=\int_{B_r}t^{-n}\cdot{\rm exp}(-\pi t^{-1}\Vert y^\vee-\xi\Vert^2){\rm d}y^\vee$$
with $y^\vee=\langle \cdot, y\rangle_{E_\mathbb{R}}$.

Applying the Poisson formula to $f_{t,r}$, we get
\begin{align}
\sum_{v\in E}\int_{B_r}\sqrt{t}\cdot{\rm exp}(-\pi t\Vert y-v\Vert^2){\rm d}y=\big({\rm covol}(\bar{E})\big)^{-1}\sum_{\xi\in E^\vee}\int_{B_r}t^{-n}\cdot{\rm exp}(-\pi t^{-1}\Vert y^\vee-\xi\Vert^2){\rm d}y^\vee.
\end{align}

Divide the formula $(1)$ by ${\rm vol}(B_r)$, ans take $t=1, r\to 0$, we have
$$\sum_{v\in E}e^{-\pi \Vert v\Vert^2}=\big({\rm covol}(\bar{E})\big)^{-1}\sum_{\xi\in E^\vee}e^{-\pi \Vert \xi\Vert^2}$$
i.e. 
$$h^0_{\theta}(\bar{E})-h^0_{\theta}(\bar{E}^\vee)=\widehat{\rm deg}(\bar{E}).$$
This is the absolute Riemann-Roch theorem for $h^0_{\theta}$.

Take $r=1$ and $t\to\infty$, the left hand side of $(1)$ is nothing but $h^0_{\rm Ar}(\bar{E})$, but $h^0_{\rm Ar}(\bar{E}^\vee)$ doesn't appear in the right hand side of $(1)$. Take $r=1$ and $t\to 0$, $h^0_{\rm Ar}(\bar{E}^\vee)$ appears in the right hand side of $(1)$, but one can not reach $h^0_{\rm Ar}(\bar{E})$ in the left hand side of $(1)$. From the physical perspective, if $f_{t,r}$ is viewed as a wave function of particle position, its Fourier transform is a wave function of particle momentum. When $r=1$ and $t$ tends to infinity or to $0$, the number of lattice points of $E$ and of its dual lattice in the unit ball $B_1$ will not appear simultaneously in $(1)$. This means that the position and the momentum of particles can not be determined simultaneously, which is known as the Heisenberg uncertainty principle.

In \cite{Bo}, Bost did more work from physical view point, he gave an explicit relationship between the arithmetic invariants $h^0_{\rm Ar}$ and $h^0_{\theta}$ using the Legendre transform between Lagrangian mechanics and Hamiltonian mechanics.

\section{Geometric interpretation of the ﬁniteness of the class number of positive quadratic form}

\begin{thm}\label{moduli}
	Let $\mathbb{A}_{\mathbb{Q}}$ be the Ad\`{e}le ring of $\mathbb{Q}$. There exists a one-to-one correspondence between the set of isometry classes of hermitian vector bundles on ${\rm Spec}\mathbb{Z}$ of rank $n$ (hence the set of isomorphism classes of Euclidean lattices of rank $n$) and the set of double-cosets 
	$${\rm GL}_n(\mathbb{Q})\backslash{\rm GL}_n(\mathbb{A}_{\mathbb{Q}})/ U$$
	where $U=\prod_{p\in \Sigma_{\mathbb{Q}}^f}{\rm GL}_n(\mathbb{Z}_p)\times {\rm O}_n(\mathbb{R})$ is the maximal compact subgroup of ${\rm GL}_n(\mathbb{A}_{\mathbb{Q}})$.
\end{thm}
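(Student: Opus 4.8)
The plan is to exhibit the correspondence through two explicit, mutually inverse constructions, reducing all of the content to the elementary local--global description of $\Z$-lattices together with the triviality of the class group of $\Z$. Write an adèle $g\in\GL_n(\A_{\Q})$ as a pair $(g_f,g_\infty)$, with finite part $g_f=(g_p)_p$ (so that $g_p\in\GL_n(\Z_p)$ for all but finitely many $p$) and archimedean part $g_\infty\in\GL_n(\R)$. First I would attach to $g$ a hermitian vector bundle as follows: at each finite place $g_p\Z_p^n$ is a $\Z_p$-lattice in $\Q_p^n$, equal to $\Z_p^n$ for almost all $p$, so there is a unique $\Z$-lattice $E_g\subset\Q^n$ with $E_g\otimes_\Z\Z_p=g_p\Z_p^n$ inside $\Q_p^n$ for every $p$; at the archimedean place I would equip $E_{g,\R}=\R^n$ with the pullback norm $\Vert v\Vert_g:=\Vert g_\infty^{-1}v\Vert_{\mathrm{std}}$ of the standard Euclidean norm. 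This produces $\bar E_g=(E_g,\Vert\cdot\Vert_g)$ of rank $n$.

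Next I would check that the isometry class of $\bar E_g$ depends only on the double coset $\GL_n(\Q)\,g\,U$. Right translation by $u\in U$ changes nothing, since $\GL_n(\Z_p)$ stabilises $\Z_p^n$ and $\mathrm{O}_n(\R)$ preserves $\Vert\cdot\Vert_{\mathrm{std}}$; left translation by $\gamma\in\GL_n(\Q)$ transports $\bar E_g$ isometrically onto $\bar E_{\gamma g}$ via the $\Q$-linear automorphism $\gamma$ of $\Q^n$, which carries $E_g$ onto $E_{\gamma g}$ locally at every place and satisfies $\Vert\gamma v\Vert_{\gamma g}=\Vert v\Vert_g$. For surjectivity I would start from an arbitrary rank-$n$ hermitian vector bundle $\bar E=(E,\Vert\cdot\Vert)$, use that $\Z$ is a PID to pick a $\Z$-basis identifying $E\cong\Z^n$, and write the resulting Euclidean norm on $\R^n$ as $\Vert g_\infty^{-1}(\cdot)\Vert_{\mathrm{std}}$ for a suitable $g_\infty\in\GL_n(\R)$ (Cholesky factorisation of the Gram matrix); then $g=(1,g_\infty)$ maps to $\bar E$. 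For injectivity I would take an isometry $\bar E_g\to\bar E_h$, extend it by scalars to some $\gamma\in\GL_n(\Q)$ with $\gamma(E_g)=E_h$, and check place by place that $h_p^{-1}\gamma g_p\in\GL_n(\Z_p)$ (same local lattice) and $h_\infty^{-1}\gamma g_\infty\in\mathrm{O}_n(\R)$ (same inner product), so that $\gamma g\in hU$ and $[g]=[h]$.

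The main obstacle --- really the only nonformal point --- is the passage between a $\Z$-lattice in $\Q^n$ and the compatible family of its localisations: the existence and uniqueness of $E_g$, and the observation that the restricted-product condition defining $\GL_n(\A_{\Q}^f)$ is exactly what forces $g_p\Z_p^n=\Z_p^n$ for almost all $p$. I would handle this either by citing the standard arithmetic-of-lattices statement, or, more conceptually, by noting it is equivalent to the fact that $\GL_n(\Q)\backslash\GL_n(\A_{\Q}^f)/\prod_p\GL_n(\Z_p)$ is a single point, which holds because $\Z$ is a principal ideal domain; this collapses the adelic double coset onto the classical quotient $\GL_n(\Z)\backslash\GL_n(\R)/\mathrm{O}_n(\R)$ of the reduction theory of positive definite quadratic forms, after which the remaining verifications are routine.
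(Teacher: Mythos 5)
Your proposal is correct and follows essentially the same route as the paper: your map $g\mapsto\bar E_g$ is exactly the orbit map of the paper's $\GL_n(\mathbb{A}_{\mathbb{Q}})$-action applied to the standard bundle $(\mathbb{Z}^n,\Vert\cdot\Vert_{\mathrm{std}})$, with $\GL_n(\mathbb{Q})$ acting by isometries and $U$ as the stabiliser. The only difference is that you spell out the surjectivity (via $\mathbb{Z}$ being a PID and Cholesky factorisation of the Gram matrix) and injectivity checks that the paper leaves implicit.
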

\begin{proof}
	For $g=\prod_{p\in \Sigma_{\mathbb{Q}}^f} g_p\times g_\infty\in {\rm GL}_n(\mathbb{A}_{\mathbb{Q}})$ and hermitian vector bundle $\bar{E}$, $$\{F_p:=g_p\cdot E_p\}$$ deduces a vector bundle $F$ since $g_p\in {\rm GL}_n(\mathbb{Z}_p)$ for almost all $p$. We may equip $F$ with a hermitian metric by setting
	$$\langle x, y\rangle_F=\langle g_\infty^{-1}\cdot x, g_\infty^{-1}\cdot y\rangle_E.$$
	So we get an action of ${\rm GL}_n(\mathbb{A}_{\mathbb{Q}})$ on the set of hermitian vector bundles on ${\rm Spec}\mathbb{Z}$ of rank $n$. Note that the element of ${\rm GL}_n(\mathbb{Q})$ gives isometry and $U$ is the stabalizer, so we are done.
\end{proof}

\subsection{Moduli of ${\rm GL}_n(\mathbb{Q})\backslash{\rm GL}_n(\mathbb{A}_{\mathbb{Q}})/ U$ and its subset}

Let $G$ be a linear algebraic group over $\mathbb{Q}$, and let $X$ be an affine variety over $\mathbb{Q}$ equipped with a $G$-action. Two rational points $x, y\in X(\mathbb{Q})$ is $G(\mathbb{Z})$-equivalent if there exists $g\in G(\mathbb{Z})$ s.t. $y=g\circ x$.
Similarly, we may define $G(\mathbb{Q})$-equivalence and $G(\mathbb{Z}_p)$-equivalence.

\begin{defn}\label{local-global}
	For $x\in X(\mathbb{Q})$, we define
	$${\rm genus}(x):=\{y\in X(\mathbb{Q})\text{ }\vert\text{ } y\sim_{G(\mathbb{Q})} x \text{ and } y\sim_{G(\mathbb{Z}_p)}x \text{ for all } p\in \Sigma_{\mathbb{Q}}^f\}$$
	and
	$$f_G(x)=\vert \{{\rm genus}(x)/{\sim_{G(\mathbb{Z})}}\}\vert.$$
	If $f_G(x)=1$, we say that $(X, G)$ satisfies the local-global principle.
\end{defn}

\begin{thm}\label{local-global2}
	For $x\in X(\mathbb{Q})$, write $G_x:=\{g\in G\text{ }\vert\text{ }g\circ x=x\}$. Then $f_G(x)$ is equal to the cardinality of the set
	$$\{\bar{g}=G_x(\mathbb{Q})gG_x(\mathbb{A}(\infty))\in G_x(\mathbb{Q})\backslash G_x(\mathbb{A}_{\mathbb{Q}})/G_x(\mathbb{A}(\infty))\text{ }\vert\text{ } \bar{g}\subset G(\mathbb{Q})G(\mathbb{A}(\infty))\}.$$
\end{thm}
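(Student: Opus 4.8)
The strategy is to unwind the definitions via the orbit--stabiliser correspondence. Throughout put $H:=G_x$ and fix compatible affine $\mathbb{Z}$-models of $G$ and $H$, so that $H(\mathbb{Z}_p)=H(\mathbb{Q}_p)\cap G(\mathbb{Z}_p)$ for every $p$ and so that, with $G(\mathbb{Q})$ and $H(\mathbb{Q})$ embedded diagonally in $G(\mathbb{A}_{\mathbb{Q}})$, one has $G(\mathbb{Q})\cap G(\mathbb{A}(\infty))=G(\mathbb{Z})$ and $H(\mathbb{Q})\cap H(\mathbb{A}(\infty))=H(\mathbb{Z})$; here $\mathbb{A}(\infty)=\widehat{\mathbb{Z}}\times\mathbb{R}$, so that $G(\mathbb{A}(\infty))=\prod_{p\in\Sigma_{\mathbb{Q}}^f}G(\mathbb{Z}_p)\times G(\mathbb{R})$ carries no condition at the archimedean place.

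The first step is to rephrase ${\rm genus}(x)$ in terms of $G(\mathbb{Q})$. Every $y\in{\rm genus}(x)$ can be written $y=\gamma\circ x$ with $\gamma\in G(\mathbb{Q})$, and $\gamma$ is then unique up to right multiplication by $H(\mathbb{Q})$; the condition $y\sim_{G(\mathbb{Z}_p)}x$ means that some $g_p\in G(\mathbb{Z}_p)$ satisfies $g_p\circ x=\gamma\circ x$, i.e.\ that $\gamma\in G(\mathbb{Z}_p)H(\mathbb{Q}_p)$. Moreover $y=\gamma\circ x$ and $y'=\gamma'\circ x$ are $G(\mathbb{Z})$-equivalent iff $\gamma'\in G(\mathbb{Z})\,\gamma\,H(\mathbb{Q})$. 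Hence $f_G(x)=\bigl|\,G(\mathbb{Z})\backslash S/H(\mathbb{Q})\,\bigr|$, where $S:=\{\gamma\in G(\mathbb{Q}):\gamma\in G(\mathbb{Z}_p)H(\mathbb{Q}_p)\ \text{for all}\ p\in\Sigma_{\mathbb{Q}}^f\}$, a set easily seen to be stable under left translation by $G(\mathbb{Z})$ and right translation by $H(\mathbb{Q})$.

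The second step is to identify $G(\mathbb{Z})\backslash S/H(\mathbb{Q})$ with the double-coset set in the statement. Given $\gamma\in S$, choose for each finite $p$ an element $g_p\in G(\mathbb{Z}_p)$ with $h_p:=\gamma^{-1}g_p\in H(\mathbb{Q}_p)$---taking $g_p=\gamma$, hence $h_p=1$, at the cofinitely many $p$ with $\gamma\in G(\mathbb{Z}_p)$---and set $h_\infty:=1$ and $h:=(h_p)_p\times h_\infty$, which then lies in $H(\mathbb{A}_{\mathbb{Q}})$. Writing $k:=(g_p)_p\times\gamma\in G(\mathbb{A}(\infty))$ we get $h=\gamma^{-1}k$, so the class $\overline{h}=H(\mathbb{Q})hH(\mathbb{A}(\infty))$ is contained in $G(\mathbb{Q})G(\mathbb{A}(\infty))$, i.e.\ it is one of the admissible double cosets; $\gamma\mapsto\overline{h}$ is the map I would use. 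In the opposite direction, an admissible double coset has a representative $h=\gamma^{-1}k$ with $\gamma\in G(\mathbb{Q})$ and $k\in G(\mathbb{A}(\infty))$; then each $k_p=\gamma h_p$ lies in $G(\mathbb{Z}_p)$ and satisfies $k_p\circ x=\gamma\circ x$ because $h_p\in H(\mathbb{Q}_p)$ fixes $x$, so $y:=\gamma\circ x$ belongs to ${\rm genus}(x)$ and $\overline{h}\mapsto[y]$ is the inverse assignment. It then remains to check independence of all auxiliary choices---the $g_p$ vary within $H(\mathbb{Z}_p)$-cosets, while $\gamma$ is pinned down up to $G(\mathbb{Z})$ on the left and $H(\mathbb{Q})$ on the right---that the two maps descend to the respective quotients, and that they are mutually inverse.

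I do not expect any genuinely deep step: the argument is a formal unwinding of definitions, and the only point that needs care is the bookkeeping at the archimedean place. The recipe forces $h$ to have trivial $H(\mathbb{R})$-component, so in verifying that passing from an admissible double coset to a point and back returns the original coset one must absorb the arbitrary $H(\mathbb{R})$-component of a chosen representative into the right-hand factor $H(\mathbb{A}(\infty))$; and in verifying well-definedness one uses the identity $G(\mathbb{Q})\cap G(\mathbb{A}(\infty))=G(\mathbb{Z})$. Both steps are available precisely because $\mathbb{A}(\infty)$ imposes no condition at $\infty$, which is the structural reason the double cosets in the statement are cut out inside $G_x(\mathbb{Q})\backslash G_x(\mathbb{A}_{\mathbb{Q}})/G_x(\mathbb{A}(\infty))$ rather than inside a compact-at-$\infty$ version.
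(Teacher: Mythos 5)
Your proposal is correct and follows essentially the same route as the paper: both unwind the orbit--stabiliser correspondence, construct the bijection between $G(\mathbb{Z})$-classes in ${\rm genus}(x)$ and the admissible double cosets via the decomposition $g=g_{\mathbb{Q}}\cdot g_{\mathbb{A}(\infty)}$, and invoke the key identity $G(\mathbb{Q})\cap G(\mathbb{A}(\infty))=G(\mathbb{Z})$ for well-definedness. Your intermediate reformulation $f_G(x)=\vert G(\mathbb{Z})\backslash S/G_x(\mathbb{Q})\vert$ is only a cosmetic reorganisation of the same argument, and the verifications you defer are exactly the routine checks the paper carries out.
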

\begin{proof}
	We denote by $S$ the set of $G(\mathbb{Z})$-equivalence classes in ${\rm genus}(x)$, and by $M$ the set of elements in $G_x(\mathbb{Q})\backslash G_x(\mathbb{A}_{\mathbb{Q}})/G_x(\mathbb{A}(\infty))$ which is a subset of  $G(\mathbb{Q})G(\mathbb{A}(\infty))$. We shall establish a one-to-one correspondence between $M$ and $S$.
	
	Let ${\bar g}=G_x(\mathbb{Q})gG_x(\mathbb{A}(\infty))\in M$, there exist $g_{\mathbb{Q}}\in G(\mathbb{Q})$ and $g_{\mathbb{A}(\infty)}\in G(\mathbb{A}(\infty))$ such that $g=g_{\mathbb{Q}}\cdot g_{\mathbb{A}(\infty)}$. Define $y=(g_{\mathbb{Q}})^{-1}\circ x$, then $y$ is a rational point of $X$. And we know that for any $p\in \Sigma_{\mathbb{Q}}^f$, the $p$-part of $g$ satisfies
	$$g_p=g_{\mathbb{Q}}\cdot g_{\mathbb{Z}_p}$$ where $g_{\mathbb{Z}_p}\in G(\mathbb{Z}_p)$ is the $p$-part of $g_{\mathbb{A}(\infty)}$. Then $g_{\mathbb{Q}}=g_p\cdot (g_{\mathbb{Z}_p})^{-1}$ so that $$y=\left(g_{\mathbb{Z}_p}\cdot {g_p}^{-1}\right)\circ x=g_{\mathbb{Z}_p}\circ x$$ holds for any $p\in \Sigma_{\mathbb{Q}}^f$. Therefore, we have $y\in {\rm genus}(x)$.
	
	We now define a map $\phi$ from $M$ to $S$, which maps ${\bar g}$ to the $G(\mathbb{Z})$-equivalence class containing $y$. Firstly, we claim that $\phi$ is well defined. In fact, if 
	$${\bar g}=G_x(\mathbb{Q})gG_x(\mathbb{A}(\infty))={\bar g}=G_x(\mathbb{Q})hG_x(\mathbb{A}(\infty)),$$
	then there exist $t_{\mathbb{Q}}\in G_x(\mathbb{Q})$ and $t_{\mathbb{A}(\infty)}\in G_x(\mathbb{A}(\infty))$ such that $h=t_{\mathbb{Q}}\cdot g\cdot t_{\mathbb{A}(\infty)}$. Write $h=h_{\mathbb{Q}}\cdot h_{\mathbb{A}(\infty)}\in G(\mathbb{Q})G(\mathbb{A}(\infty))$, then 
	$$h=h_{\mathbb{Q}}\cdot h_{\mathbb{A}(\infty)}=(t_{\mathbb{Q}}\cdot g_{\mathbb{Q}})(g_{\mathbb{A}(\infty)}\cdot t_{\mathbb{A}(\infty)})$$
	so that 
	$$s:=(h_{\mathbb{Q}})^{-1}\cdot t_{\mathbb{Q}}\cdot g_{\mathbb{Q}}=h_{\mathbb{A}(\infty)}\cdot (t_{\mathbb{A}(\infty)})^{-1}\cdot (g_{\mathbb{A}(\infty)})^{-1}\in G(\mathbb{Q})\cap G(\mathbb{A}(\infty))=G(\mathbb{Z}).$$
	Now, we have $h_{\mathbb{Q}}=t_{\mathbb{Q}}\cdot g_{\mathbb{Q}}\cdot s^{-1}$ and 
	$$\tilde{y}:=(h_{\mathbb{Q}})^{-1}\circ x=\left(s\cdot (g_{\mathbb{Q}})^{-1} \cdot (t_{\mathbb{Q}})^{-1}\right)\circ x=s\circ \left((g_{\mathbb{Q}})^{-1}\circ x\right)=s\circ y,$$
	hence $\tilde{y}$ and $y$ is $G(\mathbb{Z})$-equivalent.
	
	Secondly, we claim that $\phi$ is surjective. In fact, for $y\in {\rm genus}(x)$, there exist $g_{\mathbb{Q}}\in G(\mathbb{Q})$ and $g_p\in G(\mathbb{Z}_p)$ such that $y=(g_{\mathbb{Q}})^{-1}\circ x=g_p\circ x$ for any $p\in \Sigma_{\mathbb{Q}}^f$.
	Let $h$ be the element in $G(\mathbb{A}(\infty))$ such that $h_{\mathbb{Q}}=(g_{\mathbb{Q}})^{-1}$ and $h_p=g_p$ for any $p\in \Sigma_{\mathbb{Q}}^f$. Then $g_{\mathbb{Q}}\cdot h_p\in G_x(\mathbb{Q}_p)$, and $g:=g_{\mathbb{Q}}\cdot h$ is an element in $G_x(\mathbb{A}_{\mathbb{Q}})$. Define ${\bar g}=G_x(\mathbb{Q})gG_x(\mathbb{A}(\infty))\in M$, $\phi$ maps ${\bar g}$ to the $G(\mathbb{Z})$-equivalence class containing $y$ according to our construction.
	
	At last, $\phi$ is injective. Let ${\bar g}, {\bar h}\in M$ with $g=g_{\mathbb{Q}}\cdot g_{\mathbb{A}(\infty)}$ and $h=h_{\mathbb{Q}}\cdot h_{\mathbb{A}(\infty)}$. Suppose that $\phi({\bar g})=\phi({\bar h})$, then there exists $s\in G(\mathbb{Z})$ such that 
	$$(h_{\mathbb{Q}})^{-1}\circ x=s\circ \left(\left(g_{\mathbb{Q}}\right)^{-1}\circ x\right).$$
	Define $t_{\mathbb{Q}}=h_{\mathbb{Q}}\cdot s\cdot (g_{\mathbb{Q}})^{-1}$ and $t_{\mathbb{A}(\infty)}=(g_{\mathbb{A}(\infty)})^{-1}\cdot s^{-1}\cdot h_{\mathbb{A}(\infty)}$. It is clear that $t_{\mathbb{Q}}\in G_x(\mathbb{Q})$. Since
	$$h_{\mathbb{Z}_p}\circ x=(h_{\mathbb{Q}})^{-1}\circ x \quad \text{and}\quad g_{\mathbb{Z}_p}\circ x=(g_{\mathbb{Q}})^{-1}\circ x$$
	holds for every $p\in \Sigma_{\mathbb{Q}}^f$, it is not difficult to verify that $t_{\mathbb{A}(\infty)}\in G_x(\mathbb{A}(\infty))$. So $h=t_{\mathbb{Q}}\cdot g\cdot t_{\mathbb{A}(\infty)}$ which means ${\bar g}={\bar h}$.
\end{proof}

Let $X\subset\mathbb{A}^{n^2+1}$ be the affine variety defined by 
$$x_{ij}-x_{ji}=0\quad\text{and}\quad {\rm det}(x_{ij})y=1.$$
Then $X(\mathbb{Q})$ is one-to-one correspondent to the set of non-degenerate quadratic forms of $n$-variables over $\mathbb{Q}$. 

Note that $G:={\rm GL}_n$ has an action on $X$ given by 
$$g\circ Q=g^{\rm t}\cdot Q\cdot g\quad\text{for}\quad g\in G, Q\in X.$$
We have the following result.

\begin{prop}\label{correspondence1}
	The local-global principle for $(X,G)$ is equivalent to the local-global principle for quadratic forms. 
\end{prop}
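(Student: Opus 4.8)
The plan is to establish the equivalence by unwinding both definitions through the identification of $X(\Q)$ with the set of non-degenerate quadratic forms in $n$ variables over $\Q$ recorded above, under which the $G$-action $g\circ Q=g^{\rm t}Qg$ is the usual change of variables. First I would check that this identification is compatible with every localization: for each finite prime $p$ it matches $G(\Z_p)$-equivalence of rational points (viewed inside $X(\Q_p)$) with $\Z_p$-integral equivalence of quadratic forms, it matches $G(\Z)$-equivalence with $\Z$-equivalence, and it matches $G(\Q)$-equivalence with $\Q$-equivalence. Each of these is immediate from the matrix descriptions of $X$ and of the groups, so this step is purely formal; in particular it shows that if $Q$ is $\Z$-integral then so is every form locally equivalent to it at all finite places, so the classes being counted are the classical ones.

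Granting this, fix $x\in X(\Q)$ and let $Q$ be the corresponding form. By Definition~\ref{local-global},
$${\rm genus}(x)=\{\,Q'\ :\ Q'\sim_{\Q}Q\ \text{and}\ Q'\sim_{\Z_p}Q\ \text{for every finite }p\,\},$$
and $f_G(x)$ is the number of $\Z$-equivalence classes contained in it, i.e. the class number of this genus. The crucial point is that this set equals the classical genus of $Q$, namely $\{\,Q'\ :\ Q'\sim_{\Z_p}Q\ \text{for every finite }p\ \text{and}\ Q'\sim_{\R}Q\,\}$. One inclusion is immediate because $\Q$-equivalence implies $\R$-equivalence. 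For the reverse inclusion I would invoke the Hasse--Minkowski theorem: if $Q'\sim_{\Z_p}Q$ for every finite $p$ then $Q'\sim_{\Q_p}Q$ for every finite $p$, and adjoining $Q'\sim_{\R}Q$ yields $\Q_v$-equivalence at every place $v$, hence $\Q$-equivalence. (For positive-definite $Q$ the condition $Q'\sim_{\R}Q$ holds automatically, so there the classical genus is already cut out by the finite-place conditions.) This appeal to Hasse--Minkowski is the one non-formal ingredient, and it is the step I expect to carry the real weight of the proof.

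With the identification of ${\rm genus}(x)$ with the classical genus of $Q$ in hand, the equivalence is a matter of reading the definitions in both directions. If $(X,G)$ satisfies the local-global principle, then for every $Q$ each form in its genus is $\Z$-equivalent to $Q$; since every genus of non-degenerate quadratic forms in $n$ variables arises as ${\rm genus}(x)$ for some $x\in X(\Q)$, this says exactly that every such genus is a single $\Z$-equivalence class, which is the local-global principle for quadratic forms. Conversely, the latter gives $f_G(x)=1$ for all $x\in X(\Q)$ through the same chain of identifications. The only delicate bookkeeping is the handling of the archimedean place, dealt with once and for all by Hasse--Minkowski above.
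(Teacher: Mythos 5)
Your proof is correct, but it takes a genuinely different route from the paper's. The paper never compares ${\rm genus}(x)$ with the classical genus directly: it invokes Theorem~\ref{local-global2} to express $f_G(x)$ as the number of double cosets in $G_x(\Q)\backslash G_x(\A_{\Q})/G_x(\A(\infty))$ that are contained in $G(\Q)G(\A(\infty))$, and then uses strong approximation for ${\rm SL}_n$ (together with the observation that ${\rm GL}_n(\Z_p)$ contains a matrix of determinant $-1$) to show that for $G_x={\rm O}_n(x)$ this containment condition is automatic, so that $f_G(x)$ equals the full adelic class number of the orthogonal group, i.e.\ the class number of the genus of the corresponding form. You instead identify the set ${\rm genus}(x)$ of Definition~\ref{local-global} with the classical genus, the nontrivial inclusion resting on the Hasse--Minkowski theorem ($\Z_p$-equivalence at all finite $p$ gives $\Q_p$-equivalence, and together with $\R$-equivalence this gives $\Q$-equivalence). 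Both arguments work, and each in fact establishes the stronger statement that $f_G(x)$ equals the class number of the genus; yours is more direct and avoids the adelic double-coset machinery, at the cost of importing Hasse--Minkowski, while the paper's replaces that input by strong approximation for ${\rm SL}_n$ and stays inside the framework of Theorem~\ref{local-global2}, which is the point of the section. One aside in your write-up is incorrect, though it carries no weight in your argument: for positive-definite $Q$ the condition $Q'\sim_{\R}Q$ does \emph{not} follow automatically from $Q'\sim_{\Z_p}Q$ at all finite $p$; for instance the $E_8$ form and the even unimodular form of signature $(4,4)$ are $\Z_p$-equivalent at every finite prime but not $\R$-equivalent, which is exactly why the archimedean condition is part of the classical definition of the genus.
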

\begin{proof}
	For $x\in X(\mathbb{Q})$, the stabalizer $G_x$ is ${\rm O}_n(x)$. For any $p\in \Sigma_{\mathbb{Q}}^f$, ${\rm GL}_n(\mathbb{Z}_p)$ containes a matrix of determinant $-1$, so the elements in ${\rm O}_n(x)(\mathbb{A}_{\mathbb{Q}})$ can be transfered to  ${\rm SL}_n(\mathbb{A}_{\mathbb{Q}})$. While the special linear algebraic group ${\rm SL}_n$ satisfies the strong approximation theorem, i.e. ${\rm SL}_n(\mathbb{A}_{\mathbb{Q}})={\rm SL}_n(\mathbb{Q}){\rm SL}_n(\mathbb{A}(\infty))$. So we have ${\rm O}_n(x)(\mathbb{A}_{\mathbb{Q}})\subset {\rm GL}_n(\mathbb{Q}){\rm GL}_n(\mathbb{A}(\infty))$. By Theorem~\ref{local-global2}, $f_G(x)$ is equal to the class number of the quadratic form with respect to $x$.
\end{proof}

Assume that $Q$ is a positive quadratic form of $n$-variables over $\mathbb{Q}$. Denote by $G={\rm O}_n(Q)$ the orthogonal group associated to $Q$. Then the set of double-cosets $G(\mathbb{Q})\backslash G(\mathbb{A}_{\mathbb{Q}})/G(\mathbb{A}(\infty))$ is one-to-one correspondent to the set of $\mathbb{Z}$-equivalence classes in ${\rm genus}(Q)$. We use the moduli of ${\rm GL}_n(\mathbb{Q})\backslash{\rm GL}_n(\mathbb{A}_{\mathbb{Q}})/ U$ to construct one-to-one correspondence between the set of $\mathbb{Z}$-equivalence classes in ${\rm genus}(Q)$ and the set of certain isometry classes of hermitian vector bundles on ${\rm Spec}(\mathbb{Z})$ of rank $n$.

\begin{defn}\label{stable}
	A hermitian vector bundle $\bar{E}=(E, \Vert \cdot \Vert)$ on ${\rm Spec}(\mathbb{Z})$ is called $\mathbb{Q}$-stable if $\langle e, e'\rangle\in \mathbb{Q}$ for all $e, e'\in E$.
\end{defn}

\begin{defn}\label{integralbundle}
	A $\mathbb{Q}$-stable hermitian vector bundle $\bar{E}=(E, \Vert \cdot \Vert)$ on ${\rm Spec}(\mathbb{Z})$ is called integral if $\langle e, e'\rangle\subset\mathbb{Q}$ with $e,e'\in E$ generate an integral ideal in $\mathbb{Z}$.
\end{defn}

\begin{defn}\label{Q-equivalent}
	Let $\bar{E}=(E, \Vert \cdot \Vert)$ and $\bar{F}=(F, \Vert \cdot \Vert)$ be two hermitian vector bundles on ${\rm Spec}(\mathbb{Z})$ of the same rank. We call the hermitian metrics on $E$ and on $F$ are $\mathbb{Q}$-equivalent, if there exists an isometry $\bar{E}\cong \bar{L}$ such that the hermitian metrics on $L$ and on $F$ are given by the same hermitian inner product.
\end{defn}

\begin{prop}\label{correspondence2}
	There exists a vector bundle $E$ on ${\rm Spec}(\mathbb{Z})$ such that the hermitian bundle $(E, \Vert \cdot \Vert_Q)$ is $\mathbb{Q}$-stable and integral. Here $\Vert \cdot \Vert_Q$ denotes the metric on $E$ given by the quadratic from $Q$. $G(\mathbb{A}_{\mathbb{Q}})\circ \bar{E}$ provides a family of $\mathbb{Q}$-stable, integral hermitian vector bundles $\{\bar{E_i}\}_{i\in I}$ such that all hermitian metrics are $\mathbb{Q}$-equivalent. The set of isometry classes in $\{\bar{E_i}\}_{i\in I}$ is one-to-one correspondent to the set of double-cosets $G(\mathbb{Q})\backslash G(\mathbb{A}_{\mathbb{Q}})/G(\mathbb{A}(\infty))$. Moreover, $\widehat{\rm deg}(\bar{E_i})$ are constant for $i\in I$.
\end{prop}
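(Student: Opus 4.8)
The plan is to construct $\bar E$ explicitly, to verify by hand that the adelic action of $G={\rm O}_n(Q)$ coming from Theorem~\ref{moduli} preserves $\mathbb{Q}$-stability, integrality, the metric and the degree, and then to read off the asserted bijection from Theorem~\ref{moduli} by an argument localised at the archimedean place. Concretely, let $V=\mathbb{Q}^n$ be the space carrying $Q$, with associated symmetric bilinear form $b_Q(x,y):=\frac{1}{2}\big(Q(x+y)-Q(x)-Q(y)\big)$, positive definite since $Q>0$. Its Gram matrix in the standard basis has rational entries, so one can pick an integer $c\geq 1$ with $c^2 b_Q(e_i,e_j)\in\mathbb{Z}$ for all $i,j$ and put $E:=c\mathbb{Z}^n\subset V$, equipped with $\|\cdot\|_Q:=b_Q(\cdot,\cdot)^{1/2}$ on $E_{\mathbb{R}}=\mathbb{R}^n$. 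Then $\langle e,e'\rangle=b_Q(e,e')\in\mathbb{Z}$ for $e,e'\in E$, so $\bar E=(E,\|\cdot\|_Q)$ is $\mathbb{Q}$-stable and integral (Definitions~\ref{stable} and~\ref{integralbundle}). I would also fix the integral structure on $G={\rm O}_n(Q)$ for which $G(\mathbb{Z}_p)=\{g\in G(\mathbb{Q}_p)\mid gE_p=E_p\}$ and $G(\mathbb{A}(\infty))=\prod_{p\in\Sigma_{\mathbb{Q}}^f}G(\mathbb{Z}_p)\times G(\mathbb{R})$ — namely the orthogonal group scheme of the quadratic $\mathbb{Z}$-lattice $(E,b_Q)$, which exists because $b_Q$ is integral on $E$ — so that for $g=(g_p)_p\times g_\infty\in G(\mathbb{A}_{\mathbb{Q}})$ one has $g_pE_p=E_p$ for almost all $p$ and the local lattices $\{g_pE_p\}$ glue to a vector bundle exactly as in the proof of Theorem~\ref{moduli}.

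\textbf{The orbit.} Fix $g\in G(\mathbb{A}_{\mathbb{Q}})$ and set $\bar F:=g\circ\bar E$, so $F_p=g_pE_p$ and $\langle x,y\rangle_F=\langle g_\infty^{-1}x,g_\infty^{-1}y\rangle_Q$. Since $g_\infty\in{\rm O}_n(Q)(\mathbb{R})$ preserves $b_Q$, we get $\langle x,y\rangle_F=b_Q(x,y)=\langle x,y\rangle_Q$; thus every member of the orbit carries the single inner product $b_Q$ on $\mathbb{R}^n$, which is exactly $\mathbb{Q}$-equivalence of the metrics (take $\bar L=\bar E$ in Definition~\ref{Q-equivalent}). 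For integrality, $F\subset\mathbb{Q}^n$ and for $v,w\in F$ one has $b_Q(v,w)\in b_Q(g_pE_p,g_pE_p)=b_Q(E_p,E_p)\subseteq\mathbb{Z}_p$ for every $p$ (again because $g_p$ preserves $b_Q$), so $b_Q(v,w)\in\mathbb{Q}\cap\bigcap_p\mathbb{Z}_p=\mathbb{Z}$, and $\bar F$ is $\mathbb{Q}$-stable and integral. Finally $\det g_\infty=\pm1$ and $\det g_p\in\mathbb{Z}_p^{\times}$ for all $p$, so neither the metric nor the covolume of the underlying lattice is affected; hence $\widehat{\rm deg}(\bar F)=\widehat{\rm deg}(\Lambda^n\bar F)=-\log{\rm covol}(\bar F)=-\log{\rm covol}(\bar E)$ is independent of $g\in G(\mathbb{A}_{\mathbb{Q}})$.

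\textbf{Isometry classes versus double cosets.} The map $g\mapsto[g\circ\bar E]$ from $G(\mathbb{A}_{\mathbb{Q}})$ to the set of isometry classes of the $\bar E_i$, surjective by the definition of $\{\bar E_i\}$, is constant on right $G(\mathbb{A}(\infty))$-cosets, because for $k\in G(\mathbb{A}(\infty))$ one has $k_pE_p=E_p$ for all $p$ and $k_\infty\in{\rm O}_n(Q)(\mathbb{R})$, whence $k\circ\bar E=\bar E$; and it is constant on left $G(\mathbb{Q})$-cosets, since elements of ${\rm GL}_n(\mathbb{Q})$ act by isometries (Theorem~\ref{moduli}). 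For injectivity on $G(\mathbb{Q})\backslash G(\mathbb{A}_{\mathbb{Q}})/G(\mathbb{A}(\infty))$, suppose $g\circ\bar E\cong h\circ\bar E$ with $g,h\in G(\mathbb{A}_{\mathbb{Q}})$; by Theorem~\ref{moduli} there are $\gamma\in{\rm GL}_n(\mathbb{Q})$ and $k$ in the ${\rm GL}_n(\mathbb{A}_{\mathbb{Q}})$-stabiliser $K=\prod_p{\rm GL}_n(\mathbb{Z}_p)\times{\rm O}_n(Q)(\mathbb{R})$ of $\bar E$ with $h=\gamma gk$. The archimedean component of $K$ is the orthogonal group of $b_Q$, so $k_\infty\in{\rm O}_n(Q)(\mathbb{R})$, and since $g_\infty,h_\infty\in{\rm O}_n(Q)(\mathbb{R})$ as well, $\gamma_\infty=h_\infty k_\infty^{-1}g_\infty^{-1}\in{\rm O}_n(Q)(\mathbb{R})$; as $\gamma$ is rational this forces $\gamma\in{\rm O}_n(Q)(\mathbb{Q})=G(\mathbb{Q})$. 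Then $k=g^{-1}\gamma^{-1}h\in G(\mathbb{A}_{\mathbb{Q}})\cap K=\prod_{p}G(\mathbb{Z}_p)\times G(\mathbb{R})=G(\mathbb{A}(\infty))$, so $g$ and $h$ represent the same double coset. Combined with the identification recalled just before the statement — between $G(\mathbb{Q})\backslash G(\mathbb{A}_{\mathbb{Q}})/G(\mathbb{A}(\infty))$ and the $\mathbb{Z}$-equivalence classes in ${\rm genus}(Q)$ — this finishes the proof.

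\textbf{Main obstacle.} The delicate point is the injectivity step: Theorem~\ref{moduli} yields only a rational matrix $\gamma\in{\rm GL}_n(\mathbb{Q})$ realising the isometry, and it has to be upgraded to an element of $G(\mathbb{Q})={\rm O}_n(Q)(\mathbb{Q})$. The clean way to do this is the one above — detect orthogonality for $Q$ at the real place, using that there both the $G$-elements $g,h$ and the stabiliser $K$ of $\bar E$ are forced into ${\rm O}_n(Q)(\mathbb{R})$. A secondary, bookkeeping, difficulty is to fix the integral model of $G$ and the identity $G(\mathbb{A}_{\mathbb{Q}})\cap K=G(\mathbb{A}(\infty))$ compatibly with the notation already in use; this is precisely why $\bar E$ is arranged from the start so that $b_Q$ takes integer values on $E$.
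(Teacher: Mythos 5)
Your proof is correct and follows essentially the same route as the paper: scale a lattice by an integer $c$ to make $b_Q$ integral on $E$, check that the $G(\mathbb{A}_{\mathbb{Q}})$-action preserves the inner product and the local indices (hence $\mathbb{Q}$-stability, integrality and $\widehat{\rm deg}$), and identify isometry classes with double cosets via the stabilizer $G(\mathbb{A}(\infty))$. Your injectivity step — using the archimedean component to upgrade the rational isometry $\gamma\in{\rm GL}_n(\mathbb{Q})$ supplied by Theorem~\ref{moduli} to an element of ${\rm O}_n(Q)(\mathbb{Q})$ — is in fact more careful than the paper's one-line assertion of the correspondence, and fills a genuine gap there.
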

\begin{proof}
	At first, we choose a vector bundle $E'$ on ${\rm Spec}(\mathbb{Z})$ and equip it with the hermitian metric associated to $Q$. It is clear that $(E', \Vert \cdot \Vert_Q)$ is $\mathbb{Q}$-stable since $Q$ corresponds to a $n\times n$ metrix over $\mathbb{Q}$. Next, the elements $\langle e, e'\rangle\subset\mathbb{Q}$ with $e,e'\in E'$ generate a fractional ideal $\mathbf{a}$ in $\mathbb{Q}$, we may choose a positive integer $c$ such that $c^2\cdot\mathbf{a}$ is an integral ideal. Define $E=cE'$, then $(E, \Vert \cdot \Vert_Q)$ is $\mathbb{Q}$-stable and integral. According to the construction given in Theorem~\ref{moduli},  $G(\mathbb{A}_{\mathbb{Q}})\circ \bar{E}$ provides a family of hermitian vector bundles $\{\bar{E_i}\}_{i\in I}$. They are $\mathbb{Q}$-stable, integral and all hermitian metrics are $\mathbb{Q}$-equivalent because  $G={\rm O}_n(Q)$ is the orthogonal group associated to $Q$. Note that the stabalizer of ${\bar E}$ under the action of $G(\mathbb{A}_{\mathbb{Q}})$ is $G(\mathbb{A}(\infty))$ and the element of $G(\mathbb{Q})$ gives isometry, then the set of isometry classes in $\{\bar{E_i}\}_{i\in I}$ is one-to-one correspondent to $G(\mathbb{Q})\backslash G(\mathbb{A}_{\mathbb{Q}})/G(\mathbb{A}(\infty))$. To prove the last statement, notice that for any $i,j\in I$, the hermitian metrics on $\Lambda^n {\bar E_i}$ and on $\Lambda^n {\bar E_j}$ are given by the same inner product with respect to ${\rm det}(Q)$. On the other hand, for any place $p\in \Sigma_{\mathbb{Q}}^f$, $(E_i)_p$ and $(E_j)_p$ only differ by an element in $G(\mathbb{Q}_p)$ so that $(\Lambda^nE_i)_p$ and $(\Lambda^nE_j)_p$ only differ by $\pm 1$. This means $\Lambda^n {\bar E_i}=\Lambda^n {\bar E_j}$ and hence $\widehat{\rm deg}(\bar{E_i})=\widehat{\rm deg}(\bar{E_j})$.
\end{proof}

\begin{rem}\label{characteristic}
	For a hermitian vector bundle ${\bar E}$ on ${\rm Spec}(\mathbb{Z})$, we define the Euler-Poincar\'{e} characteristic of ${\bar E}$ as 
	$$\chi({\bar E}):=h^0_\theta({\bar E})-h^0_\theta({\bar E}^\vee),$$
	then $\chi({\bar E})=\widehat{\rm deg}(\bar{E})$ by the absolute Riemann-Roch theorem for $h^0_\theta$.
\end{rem}

\subsection{A finiteness theorem in Arakelov theory of ${\rm Spec}(\mathbb{Z})$}

According to Proposition~\ref{correspondence1}, Proposition~\ref{correspondence2} and Remark~\ref{characteristic}, the finiteness of $\mathbb{Z}$-equivalence classes in ${\rm genus}(Q)$ can be deduced from the following theorem.

\begin{thm}\label{finiteness}
	Let $\{\bar{E_i}\}_{i\in I}$ be a family of $\mathbb{Q}$-stable, integral hermitian vector bundles of rank $n$ and all hermitian metrics on $\{E_i\}_{i\in I}$ are $\mathbb{Q}$-equivalent. If there exist $a, b\in \mathbb{R}$ such that $$a<\chi(\bar{E_i})<b$$ for all $i\in I$, then the set of isomorphism classes in $\{\bar{E_i}\}_{i\in I}$ is finite. 
\end{thm}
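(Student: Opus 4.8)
The plan is to reduce the assertion to the classical fact that positive-definite integral quadratic lattices of a fixed rank and bounded discriminant comprise only finitely many isometry classes, and then to deduce that fact from the Mahler--Minkowski compactness criterion, which is the concrete incarnation, for the arithmetic curve ${\rm Spec}(\mathbb{Z})$, of the finiteness theorem in Arakelov theory alluded to above. First I would unwind the hypotheses: by Definitions~\ref{stable} and~\ref{integralbundle}, a $\mathbb{Q}$-stable, integral hermitian vector bundle $\bar{E}$ of rank $n$ is, up to isometry, nothing but a positive-definite integral $\mathbb{Z}$-lattice of rank $n$, in the sense that the Gram matrix $G$ of any $\mathbb{Z}$-basis of $E$ is a symmetric positive-definite matrix with entries in $\mathbb{Z}$ (only $\mathbb{Q}$-stability and integrality enter at this point). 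In particular every nonzero $v\in E$ satisfies $\Vert v\Vert^2=\langle v,v\rangle\in\mathbb{Z}_{\ge 1}$, so the first minimum obeys $\lambda_1(\bar{E})\ge 1$.

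Next I would translate the bound on $\chi$. By Remark~\ref{characteristic} and the definition of the Arakelov degree, $\chi(\bar{E_i})=\widehat{\rm deg}(\bar{E_i})=\widehat{\rm deg}(\Lambda^n\bar{E_i})=-\log{\rm covol}(\bar{E_i})=-\tfrac12\log\det G_i$. Hence $a<\chi(\bar{E_i})<b$ is equivalent to $e^{-2b}<\det G_i<e^{-2a}$; since $\det G_i$ is a positive integer this already forces $\det G_i$ into the finite set $\{1,\dots,\lfloor e^{-2a}\rfloor\}$ (so the family is empty unless $a<0$), and, what matters for the higher-rank geometry, it yields a uniform bound ${\rm covol}(\bar{E_i})\le M:=e^{-a}$.

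Now the heart of the argument. By Minkowski's second theorem the successive minima of $\bar{E_i}$ satisfy $\lambda_1(\bar{E_i})\cdots\lambda_n(\bar{E_i})\le (2^n/V_n)\,{\rm covol}(\bar{E_i})\le 2^nM/V_n$, where $V_n$ is the volume of the Euclidean unit ball; combined with $\lambda_1(\bar{E_i})\ge 1$ this gives $\lambda_n(\bar{E_i})\le 2^nM/V_n=:M'$ uniformly in $i$. In Arakelov language this says that the bundles $\bar{E_i}$, viewed as points of ${\rm Bun}_n\cong{\rm GL}_n(\mathbb{Z})\backslash{\rm GL}_n(\mathbb{R})/{\rm O}_n$, are confined to one fixed compact region: the lower bound $\lambda_1\ge 1$ keeps them away from the cusps while the bound on the covolume pins down the determinant. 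This is exactly the Mahler--Minkowski compactness statement. To cash it out, I would choose for each $i$ a Minkowski-reduced $\mathbb{Z}$-basis $(e_1,\dots,e_n)$ of $E_i$; then $\Vert e_k\Vert\le\kappa_n\lambda_n(\bar{E_i})\le\kappa_nM'$ for a constant $\kappa_n$ depending only on $n$, hence $|\langle e_k,e_\ell\rangle|\le(\kappa_nM')^2$ for all $k,\ell$. As these Gram matrices are integral, only finitely many of them occur, and each determines the isometry (that is, isomorphism) class of $\bar{E_i}$; therefore $\{\bar{E_i}\}_{i\in I}$ contains only finitely many isomorphism classes.

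The step I expect to be the main obstacle is precisely this last passage — upgrading the metric estimates into genuine finiteness. Relative compactness alone is insufficient, since already the unit-covolume lattices with $\lambda_1\ge 1$ form an uncountable family, so the integrality hypothesis must be exploited decisively to cut the relatively compact set $\{\bar{E_i}\}$ down to a discrete, hence finite, set; making this rigorous amounts to invoking reduction theory for positive-definite quadratic forms (Hermite's theorem), which I would phrase in the lattice/Mahler formulation so that the deduction from the finiteness theorem in Arakelov theory of ${\rm Spec}(\mathbb{Z})$ is transparent. A secondary, purely bookkeeping point is to verify that the first step respects isometry classes, i.e. that replacing a $\mathbb{Q}$-stable integral bundle by its integer Gram matrix loses no information.
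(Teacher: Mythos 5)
Your argument is correct, but it follows a genuinely different route from the paper's. You reduce everything to classical reduction theory: integrality forces $\lambda_1(\bar{E_i})\ge 1$ and $\det G_i\in\mathbb{Z}_{\ge 1}$, the bound on $\chi$ caps the covolume, Minkowski's second theorem then caps $\lambda_n(\bar{E_i})$, and a Minkowski-reduced basis yields integral Gram matrices with uniformly bounded entries, of which there are finitely many. This is the Hermite--Minkowski finiteness theorem for integral positive-definite forms of bounded determinant, and your two flagged ``obstacles'' are genuinely the right ones: discreteness comes precisely from integrality of the Gram matrices, and the Gram matrix of a basis does determine the isometry class, so both gaps close. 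The paper instead argues by induction on the rank $n$: Lemma~\ref{finitevalue2} produces in each $E_i$ a vector $y$ of uniformly bounded integral norm (via O'Meara's adelic bound rather than Minkowski's theorem), the orthogonal complement construction $F'_i=\{\alpha x-\langle x,y\rangle_i y\}$ splits off a rank-$(n-1)$ family with controlled $\chi$ to which the induction hypothesis applies, and finiteness in rank $n$ follows from sandwiching $\phi(\bar{E_i})$ between a fixed lattice $\bar{L_j}$ and its dual $\bar{L^\#_j}$. Two differences are worth noting. First, your proof never uses the hypothesis that the metrics are $\mathbb{Q}$-equivalent, so you prove a strictly stronger statement; the paper needs that hypothesis (via Witt's theorem) to choose $y$ uniformly in the inductive step. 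Second, the paper's inductive scheme is modelled on the argument that works over a general number field, where one cannot simply fix a $\mathbb{Z}$-basis and quote Minkowski reduction; your argument exploits the special situation over $\mathbb{Z}$ (free modules, class number one) to get a shorter and more self-contained proof, at the cost of being less visibly a consequence of the ``finiteness theorem in Arakelov theory'' the paper is advertising.
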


\begin{lem}\label{finitevalue1}
	Let $\{\bar{E_i}\}_{i\in I}$ be a family of $\mathbb{Q}$-stable, integral hermitian vector bundles of rank $n$. If there exist $a, b\in \mathbb{R}$ such that $$a<\chi(\bar{E_i})<b$$ for all $i\in I$, then $\{\chi(\bar{E_i})\}_{i\in I}$ only take finitely many values.
\end{lem}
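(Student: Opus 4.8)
The plan is to write $\chi(\bar E_i)$ explicitly in terms of a discriminant which the hypotheses force to be a positive integer, and then use the elementary fact that a bounded set of positive integers is finite. So the bulk of the work is a short computation, and there is essentially one conceptual point, namely locating where the integrality hypothesis is used.

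First I would reduce to a Gram determinant. By Remark~\ref{characteristic} and the definition of the Arakelov degree via top exterior powers,
$$\chi(\bar E_i)=\widehat{\rm deg}(\bar E_i)=\widehat{\rm deg}\big(\Lambda^n\bar E_i\big).$$
Since $\Z$ is a principal ideal domain, $E_i$ is free; pick a $\Z$-basis $e_1,\dots,e_n$ of $E_i$. Then $e_1\wedge\cdots\wedge e_n$ generates the rank-one $\Z$-module $\Lambda^nE_i$, so, exactly as recalled for $\mathcal O(\bar D)$ in the Introduction, the Arakelov degree of the hermitian line bundle $\Lambda^n\bar E_i$ is minus the logarithm of the norm of this generator. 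As the norm of $e_1\wedge\cdots\wedge e_n$ in $\Lambda^n(E_i)_{\R}$ (with the induced metric) equals the covolume of $\bar E_i$, i.e. the square root of the Gram determinant, I obtain
$$\chi(\bar E_i)=-\log\Vert e_1\wedge\cdots\wedge e_n\Vert=-\tfrac12\log\det\big(\langle e_j,e_k\rangle\big)_{1\le j,k\le n}.$$
(This is independent of the chosen basis: two $\Z$-bases differ by an element of ${\rm GL}_n(\Z)$, of determinant $\pm1$.)

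Next I would invoke the standing hypotheses. By $\Q$-stability the entries $\langle e_j,e_k\rangle$ are rational, and by integrality (Definition~\ref{integralbundle}, read as: the $\Z$-submodule of $\Q$ generated by all inner products of lattice vectors lies in $\Z$) they are in fact integers; hence $d_i:=\det(\langle e_j,e_k\rangle)_{j,k}\in\Z$, and $d_i>0$ because the Gram matrix of a positive-definite metric is positive definite. Thus $\chi(\bar E_i)=-\tfrac12\log d_i$ with $d_i\in\Z_{>0}$, and the assumption $a<\chi(\bar E_i)<b$ translates into $e^{-2b}<d_i<e^{-2a}$. Only finitely many positive integers lie in this bounded interval, so $\{d_i\}_{i\in I}$ is finite, and therefore so is $\{\chi(\bar E_i)\}_{i\in I}$.

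The computation is routine; the one place that really matters is the integrality hypothesis, which is precisely what upgrades the Gram determinant from a positive real (or, under $\Q$-stability alone, merely a positive rational) to a positive integer. Without it the bound $a<\chi(\bar E_i)<b$ would not restrict the covolume to finitely many values. I would therefore make sure to pin down the exact reading of Definition~\ref{integralbundle} being used, and to check that it does force an integer Gram matrix in any $\Z$-basis (equivalently, that integrality of the basis Gram matrix is equivalent to integrality of all pairwise inner products of lattice vectors).
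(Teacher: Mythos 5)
Your proof is correct and rests on the same key observation as the paper's: $\Q$-stability plus integrality forces the Gram determinant (equivalently the squared norm of a generator of $\Lambda^n E_i$) to be a positive integer, which the bound $a<\chi(\bar E_i)<b$ confines to a finite set. The paper phrases this by reducing to the rank-one bundle $\Lambda^n\bar E_i$ and using the degree formula for a section, but that is the same computation you carry out directly with the Gram matrix.
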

\begin{proof}
	According to Remark~\ref{characteristic}, we only need to prove that $\{\widehat{\rm deg}(\Lambda^n\bar{E_i})\}_{i\in I}$ take finitely many values. The fact that $\bar{E_i}$ is $\mathbb{Q}$-stable and integral implies that $\Lambda^n\bar{E_i}$ is $\mathbb{Q}$-stable and integral, we may suppose that $n=1$. Moreover, since the class number of $\mathbb{Q}$ is $1$, we may suppose that all the $E_i$ are equal to a fixed line bundle $L$. Let $s$ be a non-zero global section of $L$, it is well-known that 
	$$\widehat{\rm deg}(\bar L)=\log([L:s\mathbb{Z}])-\log\Vert s\Vert_L.$$
	Then we only need to show that $\{\log\Vert s\Vert_i\}_{i\in I}$ take finitely many values. In fact, $\bar E_i$ is integral so that $\Vert s\Vert_i\in \mathbb{Z}$, while an open interval of $\mathbb{R}$ can only contains finitely many integers. So we are done.
\end{proof}

\begin{lem}\label{finitevalue2}
	Let assumptions and notations be as in Lemma~\ref{finitevalue1}, then there exists a finite set $\Phi\subset \mathbb{Z}^*$ such that $\Vert E_i\Vert^2_i\cap \Phi\neq \emptyset$ for any $i\in I$.
\end{lem}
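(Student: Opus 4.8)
The plan is to show that the uniform lower bound on $\chi(\bar E_i)$ forces a uniform upper bound on the covolumes of the $E_i$, hence by Minkowski's theorem a uniform upper bound on the shortest nonzero vector of each $E_i$, and that integrality then makes this shortest squared length a positive integer confined to a fixed finite range. First I would rewrite the hypothesis: by Remark~\ref{characteristic} and the definition of the arithmetic degree, $\chi(\bar E_i)=\widehat{\rm deg}(\bar E_i)=-\log{\rm covol}(\bar E_i)$. Since $\mathbb{Z}$ is a principal ideal domain each $E_i$ is free of rank $n$, so $(E_i\otimes\mathbb{R},\Vert\cdot\Vert_i)$ together with the image of $E_i$ is a genuine Euclidean lattice of rank $n$, and the inequality $a<\chi(\bar E_i)$ becomes ${\rm covol}(\bar E_i)<e^{-a}$, uniformly in $i$. (The upper bound $\chi(\bar E_i)<b$ is not needed for this lemma.)

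Next I would invoke Minkowski's first theorem, equivalently the finiteness of Hermite's constant: there is a constant $c_n$ depending only on $n$ such that every Euclidean lattice $L$ of rank $n$ contains a nonzero vector $v$ with $\Vert v\Vert^2\le c_n\,{\rm covol}(L)^{2/n}$. Applied to $E_i$ this yields $v_i\in E_i\setminus\{0\}$ with $\Vert v_i\Vert_i^2\le c_n\,{\rm covol}(\bar E_i)^{2/n}<c_n\,e^{-2a/n}=:C$, where $C$ depends only on $n$ and on $a$. Since $\bar E_i$ is integral in the sense of Definition~\ref{integralbundle}, we have $\Vert v_i\Vert_i^2=\langle v_i,v_i\rangle\in\mathbb{Z}$, and this value is strictly positive because the metric is positive definite and $v_i\neq 0$; hence $\Vert v_i\Vert_i^2\in\{1,2,\dots,\lfloor C\rfloor\}$.

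Finally I would take $\Phi:=\{1,2,\dots,\lfloor C\rfloor\}$, which is a finite subset of $\mathbb{Z}^*$ independent of $i$, and observe that $\Vert v_i\Vert_i^2\in\Vert E_i\Vert_i^2\cap\Phi$ for every $i\in I$, so this intersection is nonempty. I do not expect a serious obstacle here; the two points that need care are that the Minkowski bound is genuinely uniform over the family (it depends only on $n$ and on the single real number $a$) and that it is precisely the integrality hypothesis that collapses the a priori real upper bound $C$ on $\Vert v_i\Vert_i^2$ into membership in a fixed finite set of positive integers.
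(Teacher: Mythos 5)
Your proof is correct, and the endgame is the same as the paper's: produce in each $E_i$ a nonzero vector of uniformly bounded norm, then let integrality collapse its squared length into a fixed finite set of positive integers. Where you differ is in how the uniform bound is obtained. The paper fixes a reference lattice $F$ with the same $\chi$ inside a common inner-product space, observes that the transition matrix $(a_{lj})$ from $F$ to $E_i$ has determinant $\pm 1$, and invokes O'Meara's Theorem 103:2 to find a nonzero integer vector whose image under $(a_{lj})$ has sup-norm at most a constant $\gamma$; this requires the reduction (via Lemma~\ref{finitevalue1} and $\mathbb{Q}$-equivalence of the metrics) to constant $\chi$ and a common ambient space. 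You instead read $a<\chi(\bar E_i)=\widehat{\deg}(\bar E_i)=-\log\mathrm{covol}(\bar E_i)$ as a uniform upper bound $\mathrm{covol}(\bar E_i)<e^{-a}$ and apply Minkowski's first theorem (Hermite's constant) directly. The two inputs are close cousins — O'Meara's statement is essentially a reduction-theoretic form of the same short-vector principle, packaged so as to work over a general number ring — but over $\mathbb{Z}$ your route is more economical: it needs neither the reference lattice nor the $\mathbb{Q}$-equivalence of the metrics, it skips the reduction to constant $\chi$, and it makes explicit that only the lower bound $a<\chi(\bar E_i)$ is used. The paper's formulation has the advantage of generalizing verbatim to number fields of class number greater than one, where one cannot simply normalize all the $E_i$ to free modules and argue with a single covolume.
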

\begin{proof}
	By Lemma~\ref{finitevalue1}, we may assume that $\chi(\bar E_i)=c$ are constant. Fix a $n$-dimensional $\mathbb{Q}$-vector space $V$, since the class number of $\mathbb{Q}$ is $1$, we may assume that all $E_i$ have the following form
	$$E_i=\mathbb{Z}x_{i1}+\mathbb{Z}x_{i2}+\cdots+\mathbb{Z}x_{in},$$
	where $x_{i1}, x_{i2}, \ldots, x_{in}$ is a basis of $V$. Since a positive matrix over $\mathbb{Q}$ can always decompose as a product of the transpose of some matrix with rational numbers and itself, we may suppose that the matrics on $E_i$ are all induced by the usual inner product on $V$.
	
	Now, let ${\bar F}$ be a $\mathbb{Q}$-stable, integral hermitian vector bundle of rank $n$ such that $\chi({\bar F})=c$ and that $F$ has the following form 
	$$F=\mathbb{Z}z_{1}+\mathbb{Z}z_{2}+\cdots+\mathbb{Z}z_{n},$$
	where $z_{1}, z_{2}, \ldots, z_{n}$ is a basis of $V$. Suppose that the metric on $F$ is also induced by the usual inner product on $V$. 
	
	Construct a linear map $\phi_i: V\to V$ by setting $\phi_i(z_j)=x_{ij}$ for any $1\leq j\leq n$. So we have $\phi_i(F)=E_i$. Write 
	$$x_{ij}=\phi_i(z_j)=\Sigma_l a_{lj}z_l$$ with $a_{lj}\in \mathbb{Q}.$

	According to the construction of the hermitian metric on $\Lambda^n E_i$ which is induced by the metric on $E_i$ and $\chi({\bar F})=\chi({\bar E_i})=c$, we know that ${\rm det}(\langle z_l, z_j\rangle)$ and ${\rm det}(\langle x_{il}, x_{ij}\rangle)$ only differ by a unit in $\mathbb{Z}$. Therefore $\vert {\rm det}(a_{lj})\vert^2 =1\in \mathbb{Z}$ and ${\rm det}(a_{lj})=\pm 1$. We will show that there exists a uniform bound $B\in \mathbb{Q}_+$ such that every ${\bar E_i}$ contains at least one element whose norm is less or equal to $B$.
	
	Firstly, by \cite[Theorem 103:2]{OM}, there exists a positive number $\gamma$ which only depends on $\mathbb{Q}$ satisfying the following property: for any $n\times n$-matrix $(a_{ij})$ over $\mathbb{Q}$ with ${\rm det}(a_{ij})\in \mathbb{Z}^\times$, there exists a non-zero vector $v=(v_1, \ldots, v_n)\in \mathbb{Z}^n$ such that
    $$\vert (a_{ij})\cdot v^{\rm T}\vert_\infty=\max_{i}\vert a_{i1}v_1+\cdots+a_{in}v_n\vert \leq \gamma.$$

	We apply the above result to the matrix $(a_{lj})$ and write
	$$z^{(i)}=v_1z_1+\cdots+v_nz_n\in F,$$
	then $\Vert \phi_i(z^{(i)})\Vert_i\neq 0$ because $z^{(i)}\neq 0$. Moreover $\phi_i(z^{(i)})\in E_i$ and $\Vert \phi_i(z^{(i)})\Vert^2_i\in \mathbb{Z}$ because $\bar E_i$ is an integral hermitian bundle.
	
	Secondly, write 
	$$\phi_i(z^{(i)})=\Sigma_l \eta_l z_l,\qquad \eta_l=\Sigma_j a_{lj}v_j,$$
	then $\vert \eta_l\vert\leq \gamma$ holds for any $1\leq j\leq n$. One can easily verify that 
	$$\Vert \phi_i(z^{(i)})\Vert^2_i\leq n^2\gamma^2 \max_{l,j}\vert \langle z_l, z_j\rangle\vert.$$
	Define $B=n\gamma \sqrt{\max_{l,j}\vert \langle z_l, z_j\rangle\vert}$, then for any $i\in I$, $\phi_i(z^{(i)})$ is an element in $E_i$ whose norm is less or equal to $B$. This concludes the proof.
\end{proof}

\begin{proof}(of Theorem~\ref{finiteness})
	We prove the statement by induction on $n$. For $n=1$, take isometric linear transforms if necessary, we may assume that $\langle 1_{\mathbb{C}}, 1_{\mathbb{C}}\rangle_i$ are constant. By Lemma~\ref{finitevalue1}, we may suppose that $\chi({\bar E_i})=c$ are constant. Under these assumptions, we shall prove that if ${\bar E_i}$ and ${\bar E_j}$ are algebraically isomorphic to each other, then ${\bar E_i}$ and ${\bar E_j}$ are equal. In fact, if ${\bar E_i}$ is algebraically isomorphic to ${\bar E_j}$, then $E_i$ and $E_j$ differ by a principal ideal $(t)$ with $t\in \mathbb{Q}$. $\chi({\bar E_i})=\chi({\bar E_j})$ implies that $\vert t\vert^2=1$ and hence $t=\pm 1$. This means $E_i=E_j$ and ${\bar E_i}$ and ${\bar E_j}$ are the same hermitian bundles because we have assumed that $\langle 1_{\mathbb{C}}, 1_{\mathbb{C}}\rangle_i=\langle 1_{\mathbb{C}}, 1_{\mathbb{C}}\rangle_j$.
	
	Assume that the statement is true for $n-1$, let us prove that the statement is true for $n$. By Lemma~\ref{finitevalue2}, we have $\cap_{i\in I}\Vert E_i\Vert^2_i\backslash\{0\}\neq \emptyset$. Take an element $\alpha$ in this intersection, then $\alpha\in \mathbb{Z}$. For ${\bar E_i}$, there exists an element $y\in E_i$ such that $\Vert y\Vert^2_i=\alpha$. Denote
	$$F'_i:=\{\alpha x-\langle x,y\rangle_i y \vert x\in E_i\}\quad\quad \mathbf{a}_{iy}:=\{z\in \mathbb{Q} \vert z\cdot y\in E_i\},$$
	then $F'_i$ and $\mathbf{a}_{iy}\cdot y$ are both subbundles of $E_i$. Equip them with the metrics induced by ${\bar E_i}$, we construct a hermitian bundle
	$${\bar E'_i}:=\overline{\mathbf{a}_{iy}\cdot y}\oplus {\bar F'_i}$$
	which is a hermitian subbundle of ${\bar E_i}$. The rank of ${\bar F'_i}$ is less or equal to $n-1$. On the other hand, for any $x\in E_i$, we have
	$$\alpha x=\langle x,y\rangle_i y+(\alpha x-\langle x,y\rangle_i y)\in E'_i.$$
	So $\overline{\alpha E_i}\subset {\bar E'_i}\subset {\bar E_i}$ is a sequence of hermitian subbundles of rank $n$ and hence
	$$\chi(\overline{\alpha E_i})\leq \chi({\bar E'_i})\leq \chi({\bar E_i}),$$
	i.e.
	$$\chi(\overline{\alpha E_i})-\chi(\overline{\mathbf{a}_{iy}\cdot y})\leq \chi({\bar F'_i})\leq \chi({\bar E_i})-\chi(\overline{\mathbf{a}_{iy}\cdot y}).$$
	Notice that $\Lambda^n(\alpha E_i)=\alpha^n \cdot \Lambda^n E_i$, we have $\chi(\overline{\alpha E_i})\geq a-\delta_\alpha$ where $\delta_\alpha$ is a constant depending on $\alpha$. On the other hand, $\mathbf{a}_{iy}$ is a fractional ideal containing $\mathbb{Z}$ and $\Vert y\Vert^2_i=\alpha$ which admits a uniform bound, then $\chi(\overline{\mathbf{a}_{iy}\cdot y})$ is greater or equal to a constant $\eta_\alpha$ and is less or equal to a constant $b_\alpha$ where $\eta_\alpha$ and $b_\alpha$ are both depending on $\alpha$. So we finally have
	$$a-\delta_\alpha-b_\alpha\leq \chi({\bar F'_i})\leq b-\eta_\alpha.$$
	
	Construct ${\bar F'_i}$ similarly for every $i\in I$, then $\{{\bar F'_i}\}_{i\in I}$ is a family of $\mathbb{Q}$-stable, integral hermitian bundles of rank $n-1$. We claim that the metrics on ${\bar F'_i}$ are $\mathbb{Q}$-equivalent. Actually, the metrics on ${\bar E'_i}$ are $\mathbb{Q}$-equivalent, by taking suitable isometric linear transforms, we may fix a vector space $V$ such that all ${\bar E_i}$ are contained in $V$ and the metrics on ${\bar E_i}$ are induced by the usual inner product on $V$. The Witt theorem for hermitian inner product space (cf. \cite{Di}) guarantees that $y$ can be chosen independently of $i$. Then we have an orthogonal decomposition $V=\mathbb{Q}\cdot y \perp U$ where $U$ is a subspace of $V$ of dimension $n-1$ containing $F'_i$. The above orthogonal decomposition induces a hermitian inner product on $U$ which is compatible with the metric on $F'_i$, so the metrics on ${\bar F'_i}$ are $\mathbb{Q}$-equivalent.
	
	By assumption, ${\bar F'_i}$ contains only finitely many isometry classes, denoted by $\{{\bar F_j}\}_{i=1}^m$. Define 
	$${\bar L_j}:=\overline{\mathbb{Z}\cdot y}\oplus {\bar F_j}\qquad (1\leq j\leq m),$$
	then for every ${\bar F'_i}$, there exists an isometry $\phi$ such that 
	$$\phi({\bar E'_i})=\overline{\mathbf{a}_{iy}\cdot y}\oplus {\bar F_j}\supseteq {\bar L_j},$$
	so we have $\phi({\bar E_i})\supseteq \phi({\bar E'_i})\supseteq {\bar L_j}$, and
	$$\langle \phi({\bar E_i}), {\bar L_j}\rangle_V\subseteq \langle \phi({\bar E_i}), \phi({\bar E_i})\rangle_V=\langle {\bar E_i}, {\bar E_i}\rangle_V\subseteq \mathbb{Z}.$$
	Therefore, we get ${\bar L_j}\subseteq \phi({\bar E_i})\subseteq  {\bar L^\#_j}$ where ${L^\#_j}$ is the adjoint bundle of $L_j$ defined as
	$$L_j:=\{x\in V \vert \langle x, e\rangle\in \mathbb{Z}, \forall e\in L_j\}.$$
	Since there are only finitely many hermitian bundles between ${\bar L_j}$ and ${\bar L^\#_j}$, ${\bar E_i}$ contains only finitely many isometry classes. So we are done.
\end{proof}

\section{Integral over the Arakelov divisor class group}
The contents of this section is inspired by \cite[Section 4]{GS} where the complete zeta function 
$$Z_{\mathbb Q}(s):=2\pi^{-s/2}\Gamma(s/2)\zeta(s)$$
was written as an integral over the Arakelov divisor class group of ${\rm Spec}(\mathbb{Z})$ using a new effectivity concept for Arakelov divisors
\begin{align*}
Z_{\mathbb Q}(s)&=\int_{\widehat{\rm Div}(\mathbb{Z})}N(\bar{D})^{-s}{\rm d}\mu_{\rm eff}\\
&=\int_{\widehat{\rm Pic}(\mathbb{Z})}N([\bar{D}])^{-s}\big(\int_{[\bar{D}]}{\rm exp}(-\pi\Vert 1\Vert^2_D){\rm d}\bar{D}\big){\rm d}[\bar{D}].
\end{align*}
The above expression deduces from the fact that the Mellin transform of theta function is the gamma function. The following theorem generalizes the case of theta function to the case of other smooth functions on $\mathbb{R}_{+}$ with positive values.

\begin{thm}\label{integral}
For any smooth function $f$ on $\mathbb{R}_{+}$ such that $f>0$ and that $f\circ {\rm exp}$ is a Schwartz function on $\mathbb{R}$, the Mellin transform of $f$ can be written as an integral over the Arakelov divisor class group of ${\rm Spec}(\mathbb{Z})$.
\end{thm}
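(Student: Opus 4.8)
The plan is to show that, once one identifies $\widehat{\rm Pic}(\mathbb{Z})$ with $\mathbb{R}$ via the degree map, the Mellin transform of $f$ is literally the integral, against Haar measure, of an $f$-twisted effectivity of the kind used in \cite{GS}; the only genuinely new input is a convergence analysis dictated by the Schwartz hypothesis, and the only delicate point is that the theta-series/zeta-function packaging of \cite[\S4]{GS} does not survive for general $f$.

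First I would record the structure of the target space. Because $\mathbb{Z}$ is a principal ideal domain with unit group $\{\pm1\}$, every Arakelov divisor on $\mathrm{Spec}(\mathbb{Z})$ is linearly equivalent to a unique divisor of the form $\bar{D}_\lambda:=\lambda[\infty]$ with $\lambda\in\mathbb{R}$, and $\widehat{\rm deg}\colon\widehat{\rm Pic}(\mathbb{Z})\to\mathbb{R}$, $[\bar{D}_\lambda]\mapsto\lambda$, is a group isomorphism. For this representative one has $\mathcal{O}(\bar{D}_\lambda)=(\mathbb{Z},e^{-\lambda}|\cdot|)$, so that $\Vert 1\Vert_{D_\lambda}=e^{-\lambda}=N([\bar{D}_\lambda])^{-1}$. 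I would endow $\widehat{\rm Pic}(\mathbb{Z})$ with the Haar measure $\mathrm{d}[\bar{D}]$ obtained by transporting Lebesgue measure along $\widehat{\rm deg}$, renormalised by the same constant (built from the number $w=2$ of roots of unity; the regulator and discriminant of $\mathbb{Q}$ are $1$) that appears in \cite[\S4]{GS}. This step is pure bookkeeping, but it must be carried out so that the special case $f(t)=e^{-\pi t}$ reproduces the \cite{GS} formula on the nose.

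Next, following \cite{GS}, I would attach to $f$ an effectivity: for an Arakelov divisor $\bar{D}$ with underlying fractional ideal $\mathbf{a}$, set $e_f(\bar{D}):=f(\Vert 1\Vert_D^2)$ if $\mathbb{Z}\subset\mathbf{a}$ and $e_f(\bar{D}):=0$ otherwise; the hypothesis $f>0$ makes this a bona fide nonnegative effectivity, and on the canonical representative it reads $e_f(\bar{D}_\lambda)=f(e^{-2\lambda})$. Here I would stress the one conceptual obstacle. Writing $g:=f\circ\exp$, which is Schwartz by hypothesis, we have $f(t)=g(\log t)$, so $f$ decays only like a negative power of $\log$; hence the generalised theta series $\sum_{0\neq v\in\mathbf{a}}f(\Vert v\Vert_D^2)$ over a lattice diverges, and one cannot, as for the Gaussian, replace $e_f(\bar{D})$ by such a sum. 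Consequently the factor $\zeta(s)$ that organises the \cite{GS} expression for $Z_{\mathbb{Q}}(s)$ is absent for general $f$, and the integral over $\widehat{\rm Pic}(\mathbb{Z})$ must involve only the principal term $v=\pm1$ of the effectivity. The identity to be proved then reads
\[
\int_0^\infty f(t)\,t^{s/2}\,\frac{\mathrm{d}t}{t}\;=\;c\int_{\widehat{\rm Pic}(\mathbb{Z})}N([\bar{D}])^{-s}\,e_f(\bar{D}_\lambda)\,\mathrm{d}[\bar{D}]
\]
for the above normalising constant $c$ and for $s$ on the line $\mathrm{Re}(s)=0$ (if one uses $\Vert 1\Vert_D$ in place of $\Vert 1\Vert_D^2$ the left side becomes the Mellin transform $\int_0^\infty f(t)t^{s}\,\mathrm{d}t/t$ itself, the two versions differing only by $s\mapsto 2s$).

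Finally I would carry out the computation and, above all, the convergence check, which is the only step needing care. Transporting the right-hand side along $\widehat{\rm deg}$ turns it into $c'\int_{\mathbb{R}}e^{-s\lambda}f(e^{-2\lambda})\,\mathrm{d}\lambda$, and the substitution $t=e^{-2\lambda}=\Vert 1\Vert_{D_\lambda}^2=N([\bar{D}_\lambda])^{-2}$ sends $\mathrm{d}\lambda$ to $-\tfrac12\mathrm{d}t/t$ and $e^{-s\lambda}$ to $t^{s/2}$, producing $\int_0^\infty f(t)t^{s/2}\,\mathrm{d}t/t$ up to the explicit constant. On the line $\mathrm{Re}(s)=0$ every integral in sight converges absolutely: $|N([\bar{D}])^{-s}|=1$ there, and $\int_{\mathbb{R}}|f(e^{-2\lambda})|\,\mathrm{d}\lambda=\tfrac12\int_{\mathbb{R}}|g(u)|\,\mathrm{d}u<\infty$ because $g=f\circ\exp$ is Schwartz, hence lies in $L^1(\mathbb{R})$; equivalently, on that line the Mellin transform of $f$ is nothing but the Fourier transform of the Schwartz function $g$, and so is itself a Schwartz function of $\mathrm{Im}(s)$. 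The point I would state carefully is that, since $f$ decays only logarithmically, its Mellin integral converges on no half-plane, so the asserted identity is an identity of functions on the critical line rather than of meromorphic functions — which is also why the right hypothesis is that $f\circ\exp$ (not $f$ itself) be rapidly decreasing. Specialising to $f(t)=e^{-\pi t}$ gives $e_f=e(\,\cdot\,)$ and $\int_0^\infty e^{-\pi t}t^{s/2}\,\mathrm{d}t/t=\pi^{-s/2}\Gamma(s/2)$, recovering the $\Gamma$-factor of $Z_{\mathbb{Q}}(s)$ exactly as in \cite[\S4]{GS}. The main obstacle is therefore not any single hard estimate but getting the formulation right: recognising that the zeta/theta-series mechanism of \cite{GS} cannot be imported, that the statement must be read on $\mathrm{Re}(s)=0$, and then pinning down the routine but fiddly normalisation of Haar measure.
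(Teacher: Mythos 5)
Your proposal is correct, but it reaches the formula by a genuinely different route from the paper's. The paper keeps the nonarchimedean part of the divisor: it defines the effectivity $e(\bar D)=\tfrac1c f(\Vert 1\Vert_D^2)$ on all of $\widehat{\rm Div}(\mathbb{Z})$ (vanishing unless $\prod_p p^{-n_p}\supset\mathbb{Z}$), computes
\[
\int_{\widehat{\rm Div}(\mathbb{Z})}e(\bar D)N(\bar D)^{-s}\,{\rm d}\bar D=\Big(\sum_{(0)\neq J\subset\mathbb{Z}}N(J)^{-s}\Big)\cdot\frac12\,c^{s/2}\,\mathbf{M}(f)(s/2)=\frac12\,c^{s/2}\,\mathbf{M}(f)(s/2)\,\zeta(s),
\]
and then solves for $\mathbf{M}(f)(s)$ by dividing out $\zeta(2s)$; so its final expression is an integral over the full divisor group with a $\zeta(2s)^{-1}$ prefactor, not literally an integral over the class group. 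You instead discard the finite places, identify $\widehat{\rm Pic}(\mathbb{Z})$ with $\mathbb{R}$ via the degree map, and get $\mathbf{M}(f)(s)=2\int_{\widehat{\rm Pic}(\mathbb{Z})}N([\bar D])^{-2s}e_f(\bar D_\lambda)\,{\rm d}[\bar D]$ by a bare change of variables. Your version buys two things: it matches the literal wording of the theorem (class group, no zeta prefactor), and it converges on the line ${\rm Re}(s)=0$, which is all the Schwartz hypothesis on $f\circ\exp$ guarantees; the paper's identity needs ${\rm Re}(s)>1/2$ for the ideal sum and simultaneously $s$ in the Mellin strip of $f$, a region that can be empty for general Schwartz $f\circ\exp$ unless one invokes analytic continuation --- a point the paper does not address. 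Your observation that the fibration $\widehat{\rm Div}\to\widehat{\rm Pic}$ of \cite{GS} cannot be used because $\sum_{0\neq v\in\mathbf{a}}f(\Vert v\Vert_D^2)$ may diverge is also correct and explains why the paper stays on $\widehat{\rm Div}$. Two minor imprecisions on your side: Schwartz decay of $f\circ\exp$ makes $f$ decay \emph{faster than every} power of $\log t$ (not merely ``like'' one), which still permits no power decay in $t$, so your conclusion stands; and the identity holds on the whole (possibly degenerate) Mellin strip, not only on ${\rm Re}(s)=0$ --- that line is simply the part guaranteed to be nonempty.
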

\begin{proof}
	If $f(x)$ is a smooth function on $\mathbb{R}_{+}$ such that $f\circ {\rm exp}$ is a Schwartz function on $\mathbb{R}$, it is well known that the Mellin transform $\mathbf{M}(f)(s)$ of $f(x)$ converges within a certain vertical band shaped region $a<{\rm Re}(s)<b$ on the complex plane, where $a$ and $b$ are determined by the decay rate of the function $g(t)=f({\rm exp}(t))$ when $t\to \pm\infty$.
	
	Let $C$ be the maximal value of $f(x)$. For Arakelov divisor $\bar{D}=\sum_{p\in {\rm Spm}\mathbb{Z}}n_p[p]+\lambda[\infty]$, we introduce a new effectivity of $\bar{D}$ as
	$$e(\bar{D}):=\begin{cases}
		\frac{1}{c}f(\Vert 1\Vert^2_D)  &  \text{if } \prod_p p^{-n_p}\supset \mathbb{Z}  \\
		0  &  \text{otherwise}
	\end{cases},$$
	then $0\leq e(\bar{D})\leq 1$ and 
	$$H^0({\rm Spec}(\mathbb{Z}),\mathcal{O}(\bar D))=\{h\in \mathbb{Q}^*\text{ }\vert \text{ } e\big(\widehat{\rm div}(h)+\bar{D}\big)>0 \}\cup\{0\}.$$
	Denote $J=\prod_p p^{n_p}$, $t=e^{-\lambda}$ and $N(\bar{D})=e^{\widehat{\rm deg}(\bar{D})}$. Then
	\begin{align*}
		\int_{\widehat{\rm Div}(\mathbb{Z})}e(\bar{D})N(\bar{D})^{-s}{\rm d}\bar{D}&=\sum_{(0)\neq J\subset \mathbb{Z}} N(J)^{-s}\int_0^{+\infty}t^{s-1}f\left(\frac{1}{c}t^2\right){\rm d}t\\
		&=\sum_{(0)\neq J\subset \mathbb{Z}} N(J)^{-s}\int_0^{+\infty}\frac{1}{2}c^{s/2}x^{s/2-1}f\left(x\right){\rm d}x\\
		&=\frac{1}{2}c^{s/2}\mathbf{M}(f)(s/2)\zeta(s).
	\end{align*}
	
	So we may write
	$$\mathbf{M}(f)(s)=\frac{2}{c^s\zeta(2s)}\int_{\widehat{\rm Div}(\mathbb{Z})}e(\bar{D})N(\bar{D})^{-2s}{\rm d}\bar{D}.$$
	This concluds the proof.
\end{proof}

\begin{exa}\label{deltafunction}
Consider the modular form 
$$\Delta(\tau)=e^{2\pi i \tau}\prod_{n=1}^\infty(1-e^{2\pi i n \tau})^{24}.$$
Let $x$ be positive number, then 
$$\Delta(ix)=e^{-2\pi x}\prod_{n=1}^\infty(1-e^{-2\pi n x})^{24}$$
gives a smooth function on $\mathbb{R}_+$. Actually, $\Delta(ie^t)$ is a Schwartz function on $\mathbb{R}$ because $\Delta$ is analytic, $\Delta(ie^t)\sim e^{-2\pi e^t}$ as $t\to +\infty$, $\Delta(ie^t)\sim e^{12\vert t\vert}\cdot e^{-2\pi e^{\vert t\vert}}$ as $t\to -\infty$ and the derivative of any order of $\Delta(ie^t)$ can be expressed as the product of the derivative of the Delta function and the exponential function, and its decay rate is faster than any polynomial growth. So the Mellin transform of $\Delta(ix)$ converges in a certain vertical band shaped region. According to the Fourier expansion of Delta function, we know that 
$$\Delta(ix)=\sum_{n=1}^\infty\tau(n)e^{-2\pi n x}$$
where $\tau(n)$ is the Ramanujan's Tau function. Therefore
\begin{align*}
\mathbf{M}\left(\Delta(ix)\right)(s)&=\int_0^{+\infty} x^{s-1}\Delta(ix){\rm d}x\\
&=\sum_{n=1}^\infty\tau(n) \int_0^{+\infty} x^{s-1}e^{-2\pi n x}{\rm d}x\\
&=\sum_{n=1}^\infty\tau(n) (2\pi n)^{-s} \int_0^{+\infty} u^{s-1}e^{-u}{\rm d}u\\
&=(2\pi)^{-s}\Gamma(s)L(s,\Delta)
\end{align*}
where 
$$L(s,\Delta)=\sum_{n=1}^\infty\tau(n)n^{-s}$$
is the L-function associated to the Ramanujan's Tau function.

Notice that $\Delta(ix)$ takes the maximal value at some point $0<x_0<1$. For Arakelov divisor $\bar{D}=\sum_{p\in {\rm Spm}\mathbb{Z}}n_p[p]+\lambda[\infty]$, we define the effectivity of $\bar{D}$ associated to $\Delta(ix)$ as
$$e_{\Delta}(\bar{D}):=\begin{cases}
	\frac{1}{\Delta(ix_0)}\Delta(i\Vert 1\Vert^2_D)  &  \text{if } \prod_p p^{-n_p}\supset \mathbb{Z}  \\
	0  &  \text{otherwise}
\end{cases}.$$ 
According to Theorem~\ref{integral}, we have
\begin{align*}
L(s,\Delta)&=\frac{1}{\Gamma(s)}(2\pi)^s \mathbf{M}\left(\Delta(ix)\right)(s)\\
&=\frac{2^{s+1}}{\Gamma(s)\zeta(2s)}\left(\frac{\pi}{\Delta(ix_0)}\right)^s\int_{\widehat{\rm Div}(\mathbb{Z})}e_{\Delta}(\bar{D})N(\bar{D})^{-2s}{\rm d}\bar{D}.
\end{align*}
\end{exa}

\hspace{5cm} \hrulefill\hspace{5.5cm}

Academy for Multidisciplinary Studies \& School of Mathematical Sciences, Capital Normal University, West 3rd Ring North Road 105,
100048 Beijing, P. R. China

E-mail: tangshun@cnu.edu.cn

\end{document}